\documentclass[review,onefignum,onetabnum]{siamonline190516}



\usepackage{lipsum}
\usepackage{amsfonts}
\usepackage{verbatim}
\usepackage{graphicx}
\usepackage{epstopdf}
\usepackage{algorithmic}
\usepackage{bbm}
\ifpdf
  \DeclareGraphicsExtensions{.eps,.pdf,.png,.jpg}
\else
  \DeclareGraphicsExtensions{.eps}
\fi

\usepackage{enumitem}
\setlist[enumerate]{leftmargin=.5in}
\setlist[itemize]{leftmargin=.5in}


\newcommand{\qoi}{\dfrac{d\langle J\rangle^s}{ds}\Big|_{s=0}}
\newcommand{\qois}{\langle J, \partial_s\mu^s|_{s=0}\rangle}

\newcommand{\T}{{\cal T}}
\newcommand{\A}{{\cal A}}

\newcommand{\Eu}{E^{\rm u}}
\newcommand{\Es}{E^{\rm s}}

\newcommand{\Xu}{X^{\rm u}}
\newcommand{\Xs}{X^{\rm s}}

\newcommand{\norm}[1]{\left\lVert #1 \right\rVert}
\newcommand{\abs}[1]{\left\lvert #1 \right\rvert}
\newcommand{\zetas}{\zeta^{\rm s}}

\newcommand{\cs}{c^{\rm s}}
\newcommand{\cu}{c^{\rm u}}

\newsiamremark{remark}{Remark}
\newsiamremark{hypothesis}{Hypothesis}
\crefname{hypothesis}{Hypothesis}{Hypotheses}
\newsiamthm{claim}{Claim}
\newsiamthm{fact}{Fact}
\newsiamthm{property}{Property}
\newsiamthm{formula}{Formula}
\newsiamremark{example}{Example}
\headers{Differentiating statistics in chaotic systems}{N. Chandramoorthy, and Q. Wang}

\title{A computable realization of Ruelle's formula for linear response 
of statistics in chaotic systems
\thanks{Submitted to the editors 2020/02/09.
\funding{This work was supported by the U.S. Air Force Office of Scientific Research Award 
FA8650-19-C-2207 under Fariba Fahroo and Jean-luc Cambier, and by 
the Office of Naval Research 
under grant no N00014-17-1-2959.}}}

\author{Nisha Chandramoorthy\thanks{Department of Mechanical Engineering, 
Massachusetts Institute of Technology, Cambridge, MA 
  (\email{nishac@mit.edu}).}
\and Qiqi Wang\thanks{Department of Aeronautics and Astronautics, Massachusetts Institute
of Technology, Cambridge, MA 
  (\email{qiqi@mit.edu}).}}

\usepackage{amsopn}


\ifpdf
\hypersetup{
  pdftitle={A computable realization of Ruelle's formula for linear 
  response of statistics in chaotic systems},
  pdfauthor={N. Chandramoorthy, and Q. Wang}
}
\fi


\externaldocument{ex_supplement}


\begin{document}

\maketitle

\begin{abstract}
We present a computable reformulation of 
Ruelle's linear response formula for chaotic systems.
The new formula, called Space-Split Sensitivity or S3,
achieves an error convergence of the order ${\cal O}(1/\sqrt{N})$ using 
$N$ phase points. The reformulation 
is based on splitting the overall sensitivity into that to stable and unstable 
components of the perturbation. The unstable contribution to the sensitivity 
is regularized using ergodic properties and the hyperbolic structure 
of the dynamics. Numerical examples of uniformly hyperbolic attractors 
are used to validate the S3 formula against a na\"ive finite-difference 
calculation; sensitivities match closely, with far fewer sample points required 
	by S3.
\end{abstract}

\begin{keywords}
sensitivity analysis, chaotic systems, linear response theory
\end{keywords}

\begin{AMS}
  68Q25, 68R10, 68U05
\end{AMS}

\section{Introduction}
Given a dynamical model, how much do the outputs change
in response to small changes in input parameters? The computational 
aspect of this common question, which arises across science and engineering, 
entails the mature discipline of sensitivity analysis. Using 
numerical simulations or experimental 
observations of dynamical systems, 
the computed responses or sensitivities have enabled 
multidisciplinary design optimization, uncertainty quantification and 
parameter estimation in diverse non-chaotic models (see 
\cite{peter, capecelatro} for recent surveys in aerodynamic systems). 
Sensitivity analysis in chaotic systems, however, still remains nascent.
This is because traditional sensitivity computation is done through 
linear perturbation methods including tangent or adjoint equations, 
and automatic differentiation, but 
these are inherently unstable in chaotic systems \cite{angxiu, qiqi}. 
More sophisticated techniques are needed to calculate 
the long-term effects of parameter changes in chaotic systems. 
Some of them are being actively investigated as of this writing,
and face challenges such as 
lack of convergence guarantees or prohibitive 
computational cost. In this work, we develop an alternative 
method\footnote{a preliminary version of the derivation of the method has 
appeared in \cite{nisha-aiaa}} for addressing some of these challenges. We propose 
a formula for differentiating statistical averages in 
chaotic systems to parameter perturbations, in a way that is 
amenable to computation.  
We postpone until section \ref{sec:problemStatement} the precise 
definition of the statistical response we aim to compute 
and the desired qualities of a computational solution.

Currently, the more 
sophisticated approaches to chaotic sensitivity computation include a) the ensemble 
sensitivity method b) shadowing-based approaches and c) perturbation 
methods on the transfer operator. The non-intrusive 
least squares shadowing (NILSS) method (see \cite{patrick,angxiu-nilsas},
and \cite{qiqi} for the different versions of NILSS) 
computes a shadowing perturbation that remains 
bounded in a long time window under the tangent dynamics. However, the sensitivity computed 
by using the shadowing tangent solution is 
not guaranteed to be an unbiased estimate of the true sensitivity 
\cite{patrick-KS}. 
This is because while ergodic averages converge for almost
every trajectory, there are measure zero subsets of the attractor (e.g. unstable periodic 
orbits \cite{upo}) on which they do not converge. We therefore seek an alternative that does not rely on computations 
along a single trajectory that is not guaranteed to be typical. The Lea-Allen-Haine ensemble sensitivity 
method \cite{eyink} suggests a work-around to 
the exponentially diverging sensitivities computed by the conventional tangent/adjoint methods, 
by truncating the tangent/adjoint equations at a short time, and taking a sample of average 
of many such short-time sensitivities. But, although these sample averages 
converge in the infinite time limit,
they are prohibitively expensive because the variance of tangent/adjoint solutions increases exponentially
with time (\cite{nisha_ES}).

In a recent work, Crimmins and Froyland \cite{harry} have developed a new 
Fourier analytic method for constructing the Sinai-Ruelle-Bowen or SRB measures, 
the invariant probability distributions perturbations of which we 
are interested in, of uniformly hyperbolic dynamics on tori. 
They construct a matrix representation of perturbed transfer operators that 
are quasi-compact on certain 
anisotropic Banach spaces (see \cite{gouezel} for uniformly 
 hyperbolic systems in particular, and \cite{baladi} for a recent review). Using this matrix representation, the leading eigenvector, which is the SRB measure,
 is then approximated. Although equipped with a strong theoretical basis, this 
 method would be overkill for our purpose, since we do not need to 
 construct the SRB measure, 
 but only compute the sensitivity of a given expectation
 with respect to it. Moreover, methods based on perturbations of the transfer operator such as 
\cite{harry, lucarini} typically have a computational cost that 
scales poorly with the problem dimension since they 
either involve Markov partitions (specific discretizations 
of the attractor) or need a number of basis functions to approximate 
the eigendistribution (the SRB measure) that scales with the dimension. 
Another recent method
\cite{davide} computes sensitivities by solving an adjoint equation 
as a boundary value problem on periodic orbits, although questions 
surrounding the convergence of the computed sensitivity to the 
true sensitivity (like in the shadowing methods) must be investigated further.

The strategy developed in this paper, space-split sensitivity 
or S3, deviates from that 
of all the above-mentioned methods. However, like ensemble sensitivity 
methods, it builds upon Ruelle's formula. While ensemble sensitivity suffers 
from the unbounded variance of the
unstable contribution to the overall sensitivity, S3 splits the contributions and performs a
finite-sample averaging of tangent equation solutions only for the stable
contribution. The unstable contribution is manipulated through integration-by-parts and using 
the statistical stationarity (measure preservation) of the system to yield a 
computation that does not
use unstable tangent solutions. Since both parts of the sensitivity are computed through
sampling on generic flow trajectories, the problem of the computed sensitivities corresponding
to atypical trajectories, which shadowing-based methods are vulnerable to, is averted. 
The paper is organized as follows. In the next section, we give the problem 
setting and state results from dynamical systems theory that are 
used in the derivation of S3. Our main results are stated in \cref{sec:main}, 
and the derivation of the S3 formula follows in \cref{sec:derivation}. 
An interpretation of the unstable contribution, as derived 
in S3, is presented in \cref{sec:unstableContributionInterpretation}, 
and some comments on its computational details can be found in 
\cref{sec:algorithm}. Numerical examples demonstrating a na\"ive 
implementation of the S3 formula  
are reported in \cref{sec:results}, and the conclusions follow in
\cref{sec:conclusions}.

\section{Problem statement}
\label{sec:problemStatement}
In this section, we define the output quantity of interest for S3 computation.
Where they are used, we provide a brief description of concepts 
from ergodic theory and dynamical systems, in order to make it a self-contained 
presentation for computational scientists from 
different fields. 
\subsection{The primal dynamics}
Our primal system is a $C^2$ diffeomorphism $\varphi^s$,
on a $d$-dimensional compact manifold $M$, parameterized 
by a set of parameters $s$,
\begin{align}
	\label{eqn:primal}
	u_{n+1} = \varphi^s (u_n), \: n \in \mathbb{Z}, u_n \in M.
\end{align}
In a numerical simulation, $u_n \in M$ represents a solution 
state (a $d$-dimensional vector) at time $n$ and the 
transformation $\varphi^s$ can be thought 
of as advancing by one timestep. For 
simplicity, $s$ is assumed to be a scalar. Without loss of generality, 
we take $s = 0$
to be the reference value and the map $\varphi^0$ is simply written 
as $\varphi$, without the superscript. The state vector $u_n, n\in \mathbb{Z}^+$ 
is a function of the initial state $u_0$; explicitly, $u_n = \varphi_n(u_0)$, 
where the subscript $n \in \mathbb{Z}^+$ in $\varphi_n^s$ refers to an $n$-time composition 
of $\varphi^s$, and $\varphi^s_0$ is the 
identity map on $M$. We also use the notation $\varphi_{-n}, n \in \mathbb{Z}^+$
to denote the inverse transformation $(\varphi)^{-1}$ composed with itself $n$ times; 
that is, $\varphi_{-n} (u_n) = u_0$.
\subsection{Ensemble and ergodic averages}
We assume that the dynamics $\varphi^s$ preserves an ergodic, physical probability measure $\mu^s$ of an SRB-type (see \cite{young} for an introduction to SRB 
measures), which gives us a statistical description of the dynamics. 
In particular, expectations with respect to $\mu^s$ can be observed as infinitely long time averages 
along trajectories: if $f \in L^1(\mu^s)$ is a scalar function, then $\lim_{N\to\infty}(1/N)\sum_{n=0}^{N-1}f(\varphi^s_n(u)) = 
\langle f,\mu^s\rangle, u$ Lebesgue-a.e on the basin of attraction. The superscript
$s$ in $\mu^s$ emphasizes the dependence of the SRB measure on the parameter; 
simply $\mu$, without the superscript, refers to the SRB measure at $s=0$. 
The ensemble average or the expectation of $f$ with respect to $\mu^s$, 
$\langle f, \mu^s\rangle$, is a distributional pairing 
of $f$ with $\mu^s$: the integral of $f$ on the phase space weighted 
by $\mu^s$. We sometimes use the shorter notation 
$\langle f\rangle^s$, and without the superscript at $s=0$,
to denote the same quantity. The 
infinite time average, called the ergodic average, is the more natural
form of $\langle f\rangle^s$ from the computational/experimental
standpoint, since it can be obtained by numerical evaluation/measurements 
of $f$ along trajectories. In practice the ergodic 
average is computed up to a large $N$ and this is used 
to approximate the ensemble average $\langle f\rangle^s$ -- by ergodic 
average, we mean this long but finite time average, in the remainder of 
this work. 

\subsection{Quantity of interest}
We are interested in determining the sensitivity 
of the ensemble average of an objective function $J \in 
\mathcal{C}^2(M)$, $\langle J\rangle^s$, to $s$: 
$d_s\langle J\rangle^s = \langle J, \partial_s\mu^s\rangle$. 
The regularity of $J$, along with other assumptions 
on the dynamics which we detail where they appear, 
are such that the linear response formula of Ruelle \cite{ruelle,ruelle1}
is satisfied. The objective function $J$ can also explicitly depend on $s$,
and assuming that $J$ is continuously differentiable 
with respect to $s$, the 
quantity of interest $d_s\langle J\rangle^s = \langle \partial_s 
J, \mu^s\rangle + \langle J, \partial_s\mu^s\rangle$. 
As we will see however, in a chaotic system, the mathematical or 
algorithmic difficulty lies in computing the derivative 
of the SRB measure in the second term, and not in the first, 
which can be computed as an ergodic average, 
assuming the function $\partial_s J(u,s)$ is known. 
Thus from now on we ignore the
first term and develop an algorithm 
for  $\langle J, \partial_s\mu^s\rangle$, which is nontrivial to compute -- 
we describe precisely why, in the next section.
\subsection{Ruelle's formula and its 
computational inefficiency}
\label{subsec:inefficiency}
To introduce Ruelle's formula, let the matrix-valued function $D\varphi:M\to GL(d)$\footnote{$GL(d)$ is 
the set of all invertible matrices of dimension $d\times d$.} give 
us the Jacobian matrix of the transformation $\varphi$. 
Here $D$ refers to the derivative with respect to phase space; so, the 
Jacobian at $u$, $D\varphi(u)$, is a map from the tangent space of $M$ 
at $u$, denoted as $T_u M$, to $T_{\varphi(u)} M$. We now
introduce a more succinct notation for the tangent operator 
in the following definition. 
\begin{definition}{\label{defn:tangentCocycle}}
	The tangent operator $\T(u, n):T_u M \to T_{\varphi_n(u)} M$
	is a linear operator (a matrix) for each 
	$n \in \mathbb{Z}$ and is defined as the derivative of 
	$\varphi_{n}$ with respect to the state vector $u$, 
	evaluated at $u$. By this definition, it can be written 
	as Jacobian matrix products, as follows, 
	\begin{align*} 
		\T(u, n):= \Bigg\{ \begin{array}{lr}
		D\varphi(\varphi_{n-1}(u))\cdots D\varphi(u), & n > 0 \\
			(D\varphi(\varphi_{n}(u)))^{-1}\cdots (D\varphi
			(\varphi_{-1}(u)))^{-1}, & n < 0 \\
		{\rm Id}, & n = 0.
		\end{array}
	\end{align*}
\end{definition}
From the linear response theory that was rigorously 
developed by Ruelle \cite{ruelle, ruelle1}, we have the following 
formula for the sensitivity of interest,
\begin{align}
		\qoi = \sum_{n=0}^\infty 
		\langle D(J \circ \varphi_{n}(\cdot)) \cdot 
		X(\cdot), \mu\rangle
		\label{eqn:ruelleResponseFormula},
\end{align}
where $X(u) := \partial_s \varphi^s (\varphi^{-1} (u))$ denotes the 
vector field corresponding to parameter perturbation. Here 
$DJ$ refers to the derivative of $J$ with respect to the phase 
space. For brevity, we adopt the subscript notation also 
with scalar and vector fields, as explained below: 
\begin{enumerate}
\item 
if $f:M \to \mathbb{R}$ 
is a scalar field, $f_n$ is used to denote the function $f\circ\varphi_n$.  
\item The derivative 
of $f_n = f\circ\varphi_n$, evaluated at a $\mu$-typical point $u$, is 
		denoted as $Df_n(u) := D (f\circ\varphi_n)(u)$. 
On the other hand, $(Df)_n$ refers to the 
derivative of $f$ evaluated at $u_n = \varphi_n(u)$,
where again $u$ is a $\mu$-typical point.
\item if $V$ is a vector field, then $V_n(u) \in T_{\varphi_n(u)} M$
	denotes its value at $u_n$: $V_n(u) := V(u_n)$. 
\end{enumerate}
Using this notation, the integrand in the $n$th summand of Ruelle's formula 
(Eq. \ref{eqn:ruelleResponseFormula}) can be written as $D(J 
\circ \varphi_{n}) \cdot X = DJ_n \cdot X$. Note that 
$ DJ_n \cdot X = (DJ)_n \cdot \T(\cdot, n) X$, 
is the instantaneous sensitivity of $J_n = J \circ \varphi_n$ 
to an infinitesimal perturbation to $u$ along $X(u)$. 
So, the $n$-th summand is the ensemble sensitivity of the function $J_n$. In a chaotic system, the integrand 
exhibits exponential growth $\mu$-a.e. 
The reason is that, by the definition of chaos, 
the norm of an infinitesimal perturbation to a $\mu$-a.e. 
initial condition $u$, along 
a direction $X(u)$, asymptotically grows exponentially with time, for almost 
every $X(u)$. More clearly, at $u$ $\mu$-a.e., for almost every tangent 
vector $X(u) \in T_u M$, where $\norm{\cdot}$ indicates the Euclidean norm 
in $\mathbb{R}^d$, 
$$\limsup_{n\to\infty}\dfrac{\log\norm{\T(u, n)X(u)}}{n} > 0 .$$ 

For a generic $J$, $\abs{D(J \circ \varphi_n) \cdot X}$ ($\abs{\cdot}$ 
denotes the Euclidean norm on $\mathbb{R}$) shows the 
same asymptotic trend as 
$\norm{\T(u,n)X(u)}$, that is, the integrand in Ruelle's formula, 
denoted as $L_n(u) := (DJ)_n\:\cdot \T(u,n)\:X(u)$
grows exponentially in norm, with $n$, at almost every $u$. 
This makes Ruelle's formula inefficient to evaluate directly,
because the number of samples 
required to accurately obtain the $n$th term in the series, grows rapidly with 
$n$. It is worth noting that despite the pointwise exponential growth, 
since Ruelle's formula converges, i.e., the series 
in Eq. \ref{eqn:ruelleResponseFormula} converges, the 
ensemble averages $\langle L_n \rangle$ tend to 0 as 
$n \to \infty$, due to cancellations in phase space.

The pointwise exponential 
growth of the sensitivities is however manifest in the 
variance of the direct approximation of the formula, which 
has been shown to be computationally intractable in practical 
examples \cite{nisha_ES, eyink}. Specifically, 
evaluating the series upto $N$, the variance of $\sum_{n=0}^{N} L_n$, 
grows exponentially with $N$, at almost every $u$.
As illustrated by previous numerical results \cite{nisha_ES, eyink},
this leads to the least mean squared error achievable at a given 
computational cost, to reduce poorly with the cost, usually much 
worse than in a typical 
Monte Carlo simulation. 
\section{Main contributions}
\label{sec:main}
The main contribution of this paper is a reformulation of Ruelle's formula (
Eq.\ref{eqn:ruelleResponseFormula}) into a different ensemble average 
that provably converges like a typical 
Monte Carlo computation. That is, the error in the
$N$-term ergodic average approximation of the 
alternative ensemble average asymptotically declines
as ${\cal O}(1/\sqrt{N})$. 
The reformulation relies on the following 
result, which we also prove: the 
effect of an unstable perturbation on the ensemble 
average of an objective function is captured by 
its time correlation with a certain, bounded 
function. The formal statement is as follows.
\begin{theorem}{\label{thm:rued}}
Given an unstable covariant Lyapunov vector $V^i$, $1\leq i \leq d_u$, 
there exist bounded scalar functions $g^i$ such that 
for any $J \in C^2(M)$, 
\begin{align}
	\label{eqn:theoremUnstable}
\sum_{n=0}^\infty \langle D (J \circ \varphi_n) \cdot V^i, \mu\rangle 
	= \sum_{n=0}^\infty  
	\langle J\circ\varphi_n\; g^i, \mu\rangle.
\end{align}
\end{theorem}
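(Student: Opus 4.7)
The plan is to reduce the theorem to a single, purely geometric integration-by-parts identity:
\[
\langle V^i \cdot \nabla f, \mu\rangle \;=\; -\langle f\, g^i, \mu\rangle \qquad \forall f \in C^1(M),
\]
with $g^i$ a bounded scalar depending only on $V^i$ and $\mu$. Once this is in hand, substituting $f = J\circ\varphi_n$ and summing over $n$ would give \eqref{eqn:theoremUnstable} directly, with the same $g^i$ working for every summand. So the work concentrates entirely on producing and bounding $g^i$; the $n$-dependence on the left migrates onto the factor $J\circ\varphi_n$ on the right with no extra analysis per term.

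To produce $g^i$ I would exploit the SRB property. First I would disintegrate $\mu$ along the unstable foliation: $d\mu = \int d\mu_x^{\rm u}\, d\nu(x)$, where each conditional measure $\mu_x^{\rm u}$, by the SRB property, has a positive density $\rho_x^{\rm u}$ against the induced Riemannian volume on the unstable leaf $W^{\rm u}(x)$. Next, since $V^i(u) \in \Eu(u)$ lies tangent to $W^{\rm u}(u)$, I would foliate each unstable leaf by the integral curves of $V^i$ (in the case $d_u = 1$, these curves are the leaves themselves), apply Fubini, and integrate by parts in one dimension along each integral curve. Using that the leaves are compact or that boundary contributions vanish in the limit along integral curves (standard for SRB disintegrations), the boundary terms drop out, and I would collect the resulting expression into
\[
g^i \;=\; \mathrm{div}^{\rm u}_{V^i}\!V^i \;+\; V^i \cdot \nabla^{\rm u} \log \rho_x^{\rm u},
\]
where $\mathrm{div}^{\rm u}$ and $\nabla^{\rm u}$ denote the divergence and gradient along the unstable leaf. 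The first piece depends only on the geometry of the CLV field $V^i$ and its infinitesimal spreading along itself on the leaf; the second, which is the leaf-derivative of the log of the SRB density, is the ``dynamical'' piece that encodes the ergodic measure.

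The main obstacle I expect is the boundedness of $g^i$. The first summand $\mathrm{div}^{\rm u}_{V^i} V^i$ is controlled once one has enough regularity of the unstable bundle and the leaves; for $C^2$ uniformly hyperbolic diffeomorphisms, $V^i$ and the unstable foliation are H\"older, and while H\"older vectors are not classically differentiable, the specific combination $\mathrm{div}^{\rm u}_{V^i} V^i$ can be given sense and bounded via the cocycle identity $V^i(\varphi(u)) = (1/z^i(u))\, D\varphi(u)V^i(u)$ (so that derivatives of $V^i$ along itself reduce to derivatives of the scalar $z^i$ and of $D\varphi$). The second summand, $V^i \cdot \nabla^{\rm u} \log\rho_x^{\rm u}$, is the subtle one: it requires the well-known but delicate regularity result that the SRB conditional density is H\"older (in fact $C^1$ along the leaf in the uniformly hyperbolic setting), with log-derivative given by an absolutely convergent series $\sum_{k\geq 1}\bigl(V^i \cdot \nabla^{\rm u}\log J^{\rm u}\varphi\bigr)\circ\varphi_{-k}$, where $J^{\rm u}\varphi$ is the unstable Jacobian. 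I would invoke this classical bound to conclude $g^i \in L^\infty(\mu)$.

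Finally I would justify the term-by-term summation: applied to $f_n = J\circ\varphi_n$ the left side of the identity is exactly the $n$th term of \eqref{eqn:theoremUnstable}, while convergence of the right-hand sum follows from $g^i \in L^\infty$, $J \in C^2$, and the exponential decay of correlations on SRB measures of hyperbolic systems (which in turn ensures that the Ruelle series on the left converges in the first place, so that swapping summation and integration is valid). The payoff is that the right-hand side only requires sampling $J_n$ weighted by the fixed bounded scalar $g^i$, removing the exponential tangent-vector growth that makes the raw Ruelle summand intractable.
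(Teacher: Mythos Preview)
Your approach is correct and reaches the same object $g^i$, but by a genuinely different route. The paper does not integrate by parts on unstable leaves directly. Instead it derives $g^i$ iteratively: starting from $\langle D J_n\cdot V^i,\mu\rangle$, it shifts time forward by measure preservation, uses covariance $\T(u,1)V^i(u) = z^i(u)\,V^i(\varphi(u))$ to extract the scalar expansion factor $z^i$, absorbs $1/z^i$ into the differentiated function via the product rule, and repeats. The residual ``derivative'' term shrinks by $\sim 1/z^i$ at each step and vanishes in the limit (Lemma~\ref{lem:limit}), leaving the explicit, exponentially convergent series $g^i = -\sum_{k\ge 1} (z^i_{-k}/y^{i,k}_{-k})\,(D(1/z^i)\cdot V^i)_{-k}$, whose $L^\infty$ bound is immediate from uniform hyperbolicity. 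The paper in fact presents your disintegration picture afterward (Section~\ref{sec:unstableContributionInterpretation}) as an \emph{interpretation}, identifying $g^i\rho_u$ with the negative unstable divergence of $\rho_u V^i$, but flags it as non-computational because the conditional density $\rho_u$ is not directly accessible along a trajectory. So your argument buys a clean one-shot identity valid for every $f$ and leans on classical SRB-density regularity as a black box; the paper's buys an explicit trajectory-computable recursion and proves boundedness of $g^i$ from scratch.

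Two small points to tighten. First, unstable leaves are not compact, and the boundary terms do not vanish ``in the limit along integral curves''; the correct mechanism (used in the paper's Lemma~\ref{lem:limit} and in Ruelle's original argument) is a partition of unity subordinate to a measurable partition of the unstable foliation, so that boundary contributions from adjacent pieces cancel when integrated against the factor measure. Second, your identity carries a minus sign, $\langle V^i\cdot\nabla f,\mu\rangle = -\langle f\,g^i,\mu\rangle$, so the $g^i$ you write down is the negative of the one appearing in~\eqref{eqn:theoremUnstable}.
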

While the proof is delayed until \cref{sec:unstableContribution}, 
here we mention the foremost implication of the above theorem for 
computation. Since the explanation in \ref{subsec:inefficiency} applies 
to any unstable perturbation field, the left 
hand side of \ref{eqn:theoremUnstable} does not yield a Monte Carlo 
computation. On the other hand, the ergodic average approximation 
of the right hand side does. That is,
the rate of convergence would be $-1/2$, independent of the system dimension, 
for the right hand side. 
Moreover, since the right hand side enables an ergodic average approximation,
a discretization of the phase space, which is impractical in high-dimensional 
systems, is not required. 

The main result of this paper, a computable realization of Ruelle's 
formula, follows as a corollary to Theorem \ref{thm:rued}, and 
can be stated as follows:
\begin{corollary}{\label{thm:derivationOfS3}}
	In uniformly hyperbolic systems, 
	Ruelle's formula in Eq. \ref{eqn:ruelleResponseFormula} is equivalent to 
	the following sum of two exponentially converging series, 
	\begin{align}
		\label{eqn:newFormula}
		\qoi &= 
		\sum_{n=0}^\infty \langle D(J\circ\varphi_n)
		\cdot \Xs, \mu \rangle + \sum_{n=0}^\infty 
		\langle J \circ \varphi_{n}\;g, \mu\rangle,
	\end{align}
	where $g \in L^\infty(\mu)$, and $X = \Xu + \Xs$ is 
	the decomposition of $X$ along the unstable and stable 
	Oseledets spaces respectively.  
\end{corollary}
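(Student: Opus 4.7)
The plan is to split Ruelle's formula along the hyperbolic splitting of the tangent bundle at every point, and then to handle the two halves by entirely different mechanisms: uniform contraction for the stable half, and Theorem \ref{thm:rued} combined with exponential decay of correlations for the unstable half. Concretely, I would first invoke uniform hyperbolicity to write $T_u M = \Es(u) \oplus \Eu(u)$ with a continuous (in fact H\"older) and $D\varphi$-invariant splitting, so that the perturbation field decomposes pointwise as $X = \Xs + \Xu$. By linearity of $D(J\circ\varphi_n)(\cdot)$ in its vector argument, each summand of Ruelle's formula then cleanly splits into a stable piece $\langle D(J\circ\varphi_n)\cdot \Xs,\mu\rangle$ and an unstable piece $\langle D(J\circ\varphi_n)\cdot \Xu,\mu\rangle$.

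For the stable piece, I would use the uniform contraction $\norm{\T(u,n)\Xs(u)} \le C\lambda^n \norm{\Xs}_\infty$ for some $\lambda \in (0,1)$ coming from uniform hyperbolicity to bound $\abs{D(J\circ\varphi_n)(u)\cdot\Xs(u)} \le C\lambda^n \norm{DJ}_\infty\norm{X}_\infty$ pointwise. This half of the series then converges exponentially with no further manipulation, and reproduces the first sum in Eq. \ref{eqn:newFormula} verbatim.

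For the unstable piece, I would expand $\Xu = \sum_{i=1}^{d_u} a^i V^i$ in the unstable CLV basis, with coordinate fields $a^i:M\to\mathbb{R}$ bounded (H\"older in the uniformly hyperbolic setting, since both $X$ and the $V^i$ are). Applying Theorem \ref{thm:rued} directionwise, in a form that absorbs the scalar weight $a^i$ into the bounded function it produces, should then yield $g := \sum_i \tilde{g}^i \in L^\infty(\mu)$ with
\begin{equation*}
\sum_{n=0}^\infty \langle D(J\circ\varphi_n)\cdot \Xu,\mu\rangle = \sum_{n=0}^\infty \langle J\circ\varphi_n\; g,\mu\rangle.
\end{equation*}
Exponential convergence of this second series would then follow from the classical exponential decay of correlations for Anosov diffeomorphisms with their SRB measures, applied to the H\"older pair $(J,g)$, once one verifies $\langle g,\mu\rangle = 0$ so that the constant term in the decay estimate drops out.

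The step I expect to be most delicate is pushing Theorem \ref{thm:rued}, which is stated for a single CLV $V^i$, to the $a^i$-weighted field $\Xu$. The theorem's proof proceeds by integration by parts along unstable leaves, and multiplying the integrand by a H\"older scalar $a^i$ should produce only additional bounded correction terms coming from derivatives of $a^i$ along $\Eu$; justifying this rigorously, given that both $a^i$ and $V^i$ are generically only H\"older rather than $C^1$ on an Anosov attractor, is the main technical obstacle. A secondary concern is checking $\langle g,\mu\rangle = 0$, which I expect to come out of the divergence-on-unstable-leaves structure of $g$ together with the disintegration of $\mu$ into conditional Lebesgue measures on unstable leaves.
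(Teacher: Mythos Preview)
Your proposal is correct and follows essentially the paper's route. The paper makes your ``delicate step'' explicit via the product rule $a^i\, D(J\circ\varphi_n)\cdot V^i = D(a^i\, J\circ\varphi_n)\cdot V^i - (J\circ\varphi_n)\,Da^i\cdot V^i$, then applies the iterative machinery behind Theorem~\ref{thm:rued} to the first term to obtain $g = \sum_{i\le d_u}(a^i g^i - Da^i\cdot V^i)$; the vanishing $\langle g,\mu\rangle = 0$ comes from $\langle a^i g^i,\mu\rangle = 0$ (deduced from convergence of the series for arbitrary bounded $J$) together with $\sum_i\langle Da^i\cdot V^i,\mu\rangle = 0$ cited from Ruelle, and the differentiability of $a^i$ along $E^i$ that you flag is simply assumed rather than proved.
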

The regularized expression that is Eq. \ref{eqn:newFormula}, is referred to as 
the space-split sensitivity or S3 formula. We briefly explain how 
Corollary \ref{thm:derivationOfS3} is obtained, 
and direct the reader to \cref{sec:derivation} for 
a complete presentation. We first split Ruelle's formula (Eq. \ref{eqn:ruelleResponseFormula}) into two terms 
by decomposing the vector field $X$ into its unstable and stable components. 
The first term in Eq. \ref{eqn:newFormula} directly 
results from this splitting. As shown in \cref{subsec:stableContribution}, 
it leads to an efficient 
Monte Carlo computation and hence is left unchanged.
The other term however, which contains the contribution from the 
unstable perturbation $\Xu$, needs modification. First, it is
further split into 
$d_u$ terms by writing $\Xu$ in the basis of the unstable 
covariant Lyapunov vectors or CLVs, $V^i, 1\leq i \leq d_u$. Assuming that each of the 
$d_u$ series is well-defined, each is modified by using Theorem 
\ref{thm:rued}. 

Before we proceed with the derivation of Eq. \ref{eqn:newFormula}, 
we close this section by discussing the advantages 
offered by the new formula in Eq. \ref{eqn:newFormula}. 
The first advantage is of course the Monte Carlo convergence 
of the new formula, while no such guaranteed problem-independent 
convergence rate can be ascribed to the direct evaluation of 
the original formula in Eq. \ref{eqn:ruelleResponseFormula}. Secondly, the S3 algorithm, an efficient implementation 
of Eq. \ref{eqn:newFormula} the details of which are deferred to 
a future work, only uses information obtained 
along trajectories, and therefore does not exhibit a direct 
scaling with the problem dimension. Thirdly, since Eq. \ref{eqn:newFormula} 
is an equivalent restatement of Ruelle's formula, the 
convergence of Eq. \ref{eqn:newFormula} to the true sensitivity 
is immediate from the convergence of Ruelle's formula \cite{ruelle}.

\section{Derivation of the S3 formula}
\label{sec:derivation}
Our goal is to find an alternative representation of 
the formula \ref{eqn:ruelleResponseFormula} that can be easily computed. For this, 
we begin by splitting the parameter perturbation vector $X$ into its 
stable and unstable components, along stable and unstable Oseledets spaces, 
denoted by $\Es$ and $\Eu$ respectively. The motivation for splitting $X$ 
becomes clear when we define the subspaces. The reader is referred 
to Chapter 4 of \cite{arnold} for a detailed exposition on Oseledets multiplicative 
ergodic theorem (MET); 
here we use the two-sided version of the theorem for the cocycle $\T$, with 
the assumptions explained below.
Oseledets MET gives us a direct sum decomposition (the 
so-called Oseledets splitting) $T_u M = E^1(u)\oplus
\cdots\oplus E^{d}(u)$, $u \;\mu-$a.e., into subspaces
of different asymptotic exponential growth. The subspaces 
$E^i(u)$, assumed here to be one-dimensional, have the following properties:
\begin{enumerate}
\item covariance property: for each $i = 1,2,\cdots, d$, 
	$\T(u,1)E^i(u) = E^i(\varphi(u))$. 
\item \label{propertyLE} 
	exponential growth/decay: there exist real numbers 
		$\lambda_i$, $i=1,\cdots,d$ such that $v \in E^i(u) \neq 0 \in \mathbb{R}^d$ 
		implies that 
	$\lim_{n\to\pm\infty}\dfrac{1}{n}\log\norm{\T(u,n)v} = 
	\lambda_i$. 
\end{enumerate}
The asymptotic rates $\lambda_i$, which are called the Lyapunov exponents (LEs), are
assumed to be, for simplicity, nonzero and distinct, and indexed in decreasing order, i.e., $\lambda_1 > \lambda_2 > 
\cdots >\lambda_d$. Suppose $d_u$ is the number of positive LEs. The unstable subspace $\Eu$ is defined as 
$\Eu := \oplus_{i=1}^{d_u} E^i(u)$.
As noted 
in \cref{subsec:inefficiency}, a chaotic system by definition has $d_u > 0$, 
and a nontrivial unstable subspace, consisting of nonzero vectors 
at $\mu-$a.e. phase point. The stable subspace is defined 
as $\Es := \oplus_{i=d_u+1}^d E^i$.  

\subsection{Ruelle's formula split along Oseledets spaces}
The unit vectors along $E^i$ are denoted as $V^i$ and are
also called the covariant Lyapunov vectors (CLVs). Suppose 
$X$ in the CLV basis (span of the $V^i$s) can be expressed 
as $X(u) := \sum_{i=1}^d a^i(u) V^i(u)$. We write $\Xu$ and 
$\Xs$ to represent the decomposition of $X$ along $\Eu$ and 
$\Es$ respectively, i.e., $\Xu(u) = \sum_{i=1}^{d_u} a^i(u) V^i(u)$ 
and $\Xs(u) = \sum_{i=d_u+1}^d a^i(u) V^i(u)$. Then, we can 
rewrite Ruelle's formula (Eq.\ref{eqn:ruelleResponseFormula}) as,
\begin{align}
	\qoi = \sum_{n=0}^\infty 
\langle D(J \circ \varphi_{n}) \cdot X^{\rm s}, \mu\rangle +  
\sum_{n=0}^\infty 
	\langle D(J \circ \varphi_{n}) \cdot X^{\rm u}, \mu\rangle  
\label{eqn:splitRuelleResponseFormula}.
\end{align}
The first term on the right hand side of the split formula 
(Eq.\ref{eqn:splitRuelleResponseFormula}) will henceforth be 
referred to as the stable contribution 
(denoted using the subscript ``stable'') and the second 
term as the unstable contribution (denoted using the 
subscript ``unstable'') to the overall sensitivity. The motivation 
for the split is that the stable contribution can now be computed 
as if the system were not chaotic, using a stable tangent equation that 
is developed below.
\subsection{Derivation of the stable contribution}
\label{subsec:stableContribution}
The stable contribution can 
be written as
\begin{align}
	\label{eqn:stableContributionRestatement}
	\langle J, \partial_s\mu^s|_{s=0}\rangle_{\rm stable}  
	&=  \sum_{n =0}^\infty  \langle D(J \circ \varphi_{n})
	\cdot \Xs, \mu\rangle =   
	 \sum_{n =0}^\infty \langle (DJ)_n\cdot \T(\cdot,n)
	\Xs, \mu\rangle.
\end{align}
Now we develop a stable iterative procedure for the above expression, 
that satisfies our constraint of a Monte Carlo convergence, under 
the assumption of uniform hyperbolicity -- a simplifying assumption 
on the dynamics that gives uniform rates of decay of perturbations 
along $\Es$ and $\Eu$ forward and backward in time respectively. 
To wit, in a uniformly hyperbolic system, 
there exist constants $C, \lambda > 0$ such that 
$\norm{\T(u, n)\Xs(u)} \leq C e^{-\lambda n}\norm{\Xs(u)}$, for all 
$n \in \mathbb{Z}^+$ and for $u \;\;\mu$-a.e. Such a uniform decay also 
applies backward in time to perturbations along $\Eu$, i.e., $\norm{\T(u, -n)\Xu(u)} 
\leq C e^{-\lambda n}\norm{\Xu(u)}$, for all $n \in \mathbb{Z}^+$, 
and for $u \;\;\mu$-a.e., with the same constants $C, \lambda > 0$.
Under the assumption that $\varphi$ is uniformly hyperbolic on $M$,
and the assumption that $\norm{DJ}, \norm{\Xs} 
\in L^\infty(\mu)$, at $\mu$ almost every $u$,
$\abs{DJ(\varphi_n(u))\cdot \T(u,n)\Xs(u)} \leq C 
e^{-\lambda n} \norm{DJ}_\infty \norm{\Xs}_\infty$, where 
$\norm{f}_\infty := \inf\left\{
\alpha: \mu\{ u \in M : \abs{f(u)} > \alpha\} = 0\right\}$ 
for a scalar function $f:M \to \mathbb{R}$; when $V$ is a vector field,
$\norm{V}_\infty$ is defined similarly with $\abs{f(u)}$ replaced 
with $\norm{V(u)}$. In other words, 
the $L^\infty$-norm of the integrand in Eq. \ref{eqn:stableContributionRestatement} 
is exponentially decreasing with $n$. 
\subsection{Computation of the stable contribution}
\label{subsec:stableContributionComputation}
As a result of the exponentially decaying summation, 
truncation at a small number of terms provides a good 
approximation. We suggest the 
following method that uses a tangent equation, to compute 
the stable contribution in practice, since a tangent 
solver is usually available. We introduce the 
stable tangent equation, named so for using only the 
stable component of the perturbation but otherwise 
resembling a conventional tangent equation,
\begin{align}
	\notag
	\zetas_{i} &= D\varphi(u_{i-1}) \zetas_{i-1} + \Xs_{i}, \;\; i = 0,1,\cdots,N-1 \\
	\zetas_{-1} &= 0 \in \mathbb{R}^d
	\label{eqn:stableTangentEquation}. 
\end{align}
We can show that using the solutions of the above stable 
tangent equation, the stable contribution can be approximated as,
\begin{align}
	\label{eqn:stableContribution}
	{\qois}_{\rm stable} \approx \frac{1}{N}\sum_{n=0}^{N-1}DJ(u_n) \cdot \zetas_n. 
\end{align}
In \cref{prop:stableContribution}, we show, under the assumption of 
uniform hyperbolicity, that the error in the 
above approximation decays as ${\cal O}(1/\sqrt{N})$.
\subsection{The unstable contribution: an ansatz}
\label{sec:unstableContribution}
In this section, we derive a regularized expression for 
the unstable contribution defined in Eq. \ref{eqn:splitRuelleResponseFormula}. 
Denoting the components of $X$ along the $i$th CLV by the scalar 
field $a^i$, we can write $\Xu := \sum_{i=1}^{d_u} a^i V^i$. Thus,  
the unstable contribution from Eq. \ref{eqn:splitRuelleResponseFormula}
can be written as,
\begin{align}
	\label{eqn:furtherSplitRuelleformula}
		\qois_{\rm unstable}  = 
	\sum_{n=0}^\infty 
	\sum_{i=1}^{d_u} \langle D(J \circ \varphi_n) \: \cdot\: a^i V^i, \mu\rangle,
\end{align}
with the underlying assumption that the series in 
Eq. \ref{eqn:furtherSplitRuelleformula} converges for each $i \leq d_u$. 
We first informally motivate the ensuing derivation 
of a mollified expression for Eq. \ref{eqn:furtherSplitRuelleformula}.  
The integrand can be viewed as a linear functional $l_n:\Eu(u) \to \mathbb{R}$, 
evaluated at $V^i(u)$, and defined by 
 $l_n(V(u)) := a^i(u) (DJ)_n\cdot \T(u, n) V(u)$, $V(u) \in \Eu(u)$. Recall that although $l_n$ may be bounded for a finite 
$n$, the bound is an exponentially increasing function making the evaluation 
of $\langle l_n(V^i)\rangle$ computationally infeasible. 
In particular, at $\mu$-a.e. $u$, there exists an $N(u)$ such that 
$\abs{l_n(V(u))} \leq \norm{a^i}_\infty\norm{DJ}_\infty c\: e^{\lambda_1 n}\: \norm{V(u)}$, 
for all $n\geq N(u).$ On the other hand, due to the 
convergence of Ruelle's formula, the ensemble average $\langle 
l_n(V)\rangle$ declines asymptotically at least 
when $V = V^i, i \leq d_u$. Since we ultimately 
want to compute $\langle l_n(V^i)\rangle$ as opposed to the 
pointwise values of $l_n(V^i)$, we propose the following 
ansatz for the unstable contribution, for some $Y^{i,n} \in {\Eu}^*$, 
the dual of $\Eu$ -- 
\begin{align}
	\label{eqn:heuristic}
	\qois_{\rm unstable} = \sum_{n=0}^\infty 
	\sum_{i=1}^{d_u} 
	\langle V^i \cdot Y^{i,n}, \mu\rangle.
\end{align} 
In particular, we require that 
the vector field $Y^{i,n}$ is bounded for all $n$ and 
additionally such that $\abs{\sum_{i=1}^{d_u} 
\langle Y^{i,n}\cdot V^i\rangle}$ 
exponentially decreases with $n$. Then, if 
the central limit theorem holds 
for the integrand above, 
the heuristic expression in Eq. \ref{eqn:heuristic} leads to a desired 
Monte Carlo algorithm 
via ergodic averaging. Essentially, the ansatz chosen to 
satisfy our computational constraints suggests a vector field $Y^{i,n}$ 
that captures the overall sensitivity of $J_n$ to perturbations along $V^i$.
It is important that the pointwise values of $l_n(V^i)$ are not matched. 
The reason is, 
whenever for a finite $n$ depending on $u$, $l_n$ 
is a bounded linear functional on $\Eu(u)$, the uniqueness of 
$Y^{i,n}(u) \in {\Eu}^*(u)$ from Riesz 
representation theorem gives $Y^{i,n}(u) = a^i(u) DJ_n(u)$. As a result, 
$Y^{i,n}$ does not satisfy our requirements anymore and 
the original problem of large variances of ergodic averages has not been solved. 
Thus, we may require that 
$l_n(V^i(u))$ not
equal $V^i(u)\cdot Y^{i,n}(u)$ at each $u$ and only that $\langle
l_n(V^i)\rangle = \langle V^i\cdot Y^{i,n}\rangle,$ at each $n$.  

\subsubsection{Reformulation of the unstable contribution}
\label{sec:unstableContributionDerivation}
In this section, beginning with the original expression in 
Eq. \ref{eqn:splitRuelleResponseFormula}, we derive a new 
expression for the unstable contribution, holding Eq. \ref{eqn:heuristic} 
as a motivation. From Eq. \ref{eqn:splitRuelleResponseFormula},  
fixing an $i\leq d_u$, and isolating the
$n$-th summand,
\begin{align}
	\label{eqn:productRuleForDerivative}
	\langle a^i\: D(J\circ\varphi_n)\;\cdot\:V^i, \mu \rangle 
	= \langle D(a^i J \circ \varphi_n)\;\cdot\; V^i, \mu\rangle
	- \langle (J\circ \varphi_n) Da^i \cdot V^i, \mu \rangle. 
\end{align}
First, assuming each $a^i$ is differentiable along $E^i$, 
Eq. \ref{eqn:productRuleForDerivative} is valid. 
Moreover, the second term is a time correlation at time $n$, 
between the functions $J$ and $Da^i\cdot V^i$. If both 
these functions are assumed to be continuous, then the correlation 
between them decays exponentially in time \cite{chernov}. That is, 
the second term would 
approach its mean exponentially fast, for some $\gamma \in (0,1)$:
\begin{align}
	\label{eqn:correlationDecay}
	\abs{ \langle (J\circ \varphi_n) Da^i \cdot V^i, \mu \rangle
	- \langle J,\mu\rangle\langle\; Da^i \cdot V^i, \mu\rangle } \sim {\cal O}(\gamma^n).
\end{align}
In fact, if $X$ is assumed to be smooth, $\sum_{i=1}^{d_u} \langle Da^i\cdot 
V^i\rangle = 0$ (this result is proved in Theorem 3.1(b) of \cite{ruelle}).
It then follows that $\abs{\langle 
J\circ\varphi_n\;\sum_{i=1}^{d_u} Da^i\cdot V^i\rangle}$ 
exponentially decreases in $n$, and hence the vector field $Y^{i,n}_1 := -J\circ\varphi_n\; Da^i $ potentially forms a part of $Y^{i,n}$. 
The restatement in Eq. \ref{eqn:productRuleForDerivative}
therefore confines the problematic derivative to the first term in 
Eq. \ref{eqn:productRuleForDerivative}. 
We can now focus our attention 
on the first term to obtain the remainder of $Y^{i,n}$. Applying measure preservation of $\varphi$ on the first term, 
we obtain, for some $k \in \mathbb{N}$,
\begin{align}
	\label{eqn:measurePreservation}
	\langle D(a^i J \circ \varphi_{n})\;\cdot\; V^i, \mu\rangle
	= \langle (D(a^i_k\; J_{n+k}))_k \;
	\cdot \; V^i_k, \mu\rangle,
\end{align}
where we have adopted the succinct notation $J_k$ to denote the function 
$J\circ \varphi_k$; we neglect writing the subscript when $k=0$. The motivation for using measure preservation
forward in time becomes clear in the subsequent steps.  Some 
intuitive reasoning can be immediately made however: $Y^{i,n}$ is a vector 
field that captures the ensemble average of the 
directional derivative of $J_n := J \circ \varphi_n$, without matching 
the pointwise derivatives. 
As a next step, we
use the covariance of $V^i$ to express the integrand in Eq. \ref{eqn:measurePreservation} as 
a linear functional on $V^i(u)$ since we want to obtain part of 
$Y^{i,n}(u)$. Putting $k = 1$,
\begin{align}
	\label{eqn:covarianceManipulation}
	\langle D(a^i J_{n})\;\cdot\; V^i, \mu\rangle
	= \langle (D(a^i_1\; J_{n+1}))_1 \;
	\cdot \; \dfrac{\T(u,1) V^i}{z^i}, \mu \rangle 
	 = \langle D(a^i_1\; J_{n+1}) \;
	\cdot \; \dfrac{V^i}{z^i}, \mu\rangle,
\end{align}
where we have introduced $z^i(u) := \norm{\T(u,1) V^i(u)}$, and 
used the chain rule to go from the second expression to the third. 
Note that $\langle \log|z^i|, \mu \rangle = 
\lambda_i$, and in a uniformly hyperbolic system $\norm{z^i}_\infty > e^\lambda/C$. 
In order to take 
full advantage of the downscaling 
offered by the $z^i$, we again 
rewrite the integrand in the following 
way that is valid because $z^i$ is 
differentiable along 
$E^i$:
\begin{align}
	\label{eqn:zAbsorption}
	\langle D(a^i J_n)\;\cdot\; V^i, \mu\rangle
	 = \langle D(a^i_1\; J_{n+1}/z^i) \;
	\cdot \; V^i, \mu\rangle - \langle J_{n+1} a^i_1 D(1/z^i) \cdot V^i, \mu
	\rangle.
\end{align}
One advantage of rewriting is immediately clear -- 
an infinite sum of the second 
term over $n$, to obtain the unstable 
contribution, is well-posed. This is
because, similar to the second term in 
Eq. \ref{eqn:productRuleForDerivative}, 
this term is in the form of a time
correlation. The other advantage in 
Eq. \ref{eqn:zAbsorption} is that 
it can be used iteratively to evaluate the left hand side, making the integrand 
asymptotically smaller in norm. We now develop 
such an iterative procedure by first noticing that Eq. \ref{eqn:zAbsorption}
is valid for any bounded function $J$. Indeed since $\norm{(a^i_1 J_{n+1})/(a^i z^i)}_\infty 
\leq \norm{J}_\infty\norm{1/z^i}_\infty \leq C e^{-\lambda} \norm{J}_\infty$, 
$(a^i_1 J_{n+1})/(a^i z^i)$ is also a bounded function and thus 
can replace $J_n$ on the left hand side of Eq. \ref{eqn:zAbsorption}. Doing 
this replacement we obtain,
\begin{align}
	\label{eqn:iterationBase}
	\langle D(a^i_1 J_{n+1}/z^i)\;\cdot\; V^i, \mu\rangle
	 = \langle D(a^i_2\; J_{n+2}/(z^i_1 z^i)) \;
	\cdot \; V^i, \mu\rangle - \langle \dfrac{a^i_2 J_{n+2}}{z^i_1} \; D(1/z^i) \cdot V^i, \mu
	\rangle.
\end{align}
Note that $(z^i_1 z^i)(u) = \norm{
\T(u,2)V^i(u)}$; for notational 
convenience we introduce the scalar 
function $y^{i,k}(u) := \norm{\T(u,k) 
V^i(u)} = \Pi_{j=0}^{k-1} z^i_j(u), k \in \mathbb{N}$.
Now Eq. \ref{eqn:zAbsorption} can be used as a base 
for recursion by substituting for the first term on 
its right hand side using Eq. \ref{eqn:iterationBase}. 
Thus Eq. \ref{eqn:zAbsorption} becomes,
\begin{align}
	\label{eqn:iterationOne}
	\langle D(a^i J_n)\cdot V^i, \mu\rangle
	 = \langle D\left(\frac{a^i_2\; J_{n+2}}{y^{i,2}}\right) \:
	\cdot \: V^i, \mu\rangle - \sum_{k=1}^2 
	\langle \dfrac{a^i_k J_{n+k} z^i}{
		y^{i,k}} 
	\: D(1/z^i) \cdot V^i, \mu
	\rangle.
\end{align}
Now the recursion can 
be continued by obtaining an expression for the first term on the 
right hand side of Eq. \ref{eqn:iterationOne}, by using 
$(a^i_2 J_{n+2})/(a^i y^{i,2})$ in place of $J_n$ and so on. We obtain 
the following expression in the infinite limit of applying this 
recursion,
\begin{align}
	\label{eqn:iterationLimit}
	\langle D(a^i J_n)\;\cdot\; V^i, \mu\rangle
	 = \lim_{k\to\infty} \langle D(a^i_k\; J_{n+k}/y^{i,k}) \;
	\cdot \; V^i, \mu\rangle - \sum_{k=1}^\infty 
	\langle \dfrac{a^i_k J_{n+k} z^i}{y^{i,k}} \;
	D(1/z^i) \cdot V^i, \mu
	\rangle.
\end{align}
In lemma 
\ref{lem:limit}, we show that 
the limit in the first term in Eq. \ref{eqn:iterationLimit} is 0. 
In fact, the result that is proved 
is that for a sequence of bounded functions 
$f_n$ which goes to 0 pointwise
almost everywhere, the sequence 
of ensemble averages of 
directional derivatives along 
the unstable directions also converges 
to 0.  On applying measure preservation 
to each summand in the second term of 
Eq. \ref{eqn:iterationLimit}, we obtain a series of time 
correlations of a function $J$ with another 
bounded function, so that Eq. \ref{eqn:iterationLimit}
becomes,
\begin{align}
	\label{eqn:almostUnstableContribution}
	\langle D(a^i J_n)\;\cdot\; V^i, \mu\rangle
	 =  - \sum_{k=1}^\infty 
	\langle \dfrac{a^i J_n z^i_{-k}}{y^{i,k}_{-k}} \;
	D(1/z^i_{-k})_{-k} \cdot V^i_{-k}, \mu
	\rangle.
\end{align}
The second term in the equation above is a converging
series because the $L^\infty$ norms of the integrands 
are exponentially decreasing with $k$. More 
clearly, we have at $\mu$-a.e. $u$ that 
\begin{align}
	\abs{\dfrac{a^i J_n z^i_{-k}}{y^{i,k}_{-k}}
	(D(1/z^i_{-k}))_{-k}\cdot V^i_{-k}} 
	\leq \norm{a^i}_\infty\norm{J}_\infty 
	\norm{D(1/z^i)\cdot V^i}_\infty
	\dfrac{z^i_{-k}}{y^{i,k}_{-k}} \leq 
	C' e^{-\lambda (k-1)}, 
\end{align}
and hence $$\norm{\dfrac{a^i J_{n} z^i_{-k}}{y^{i,k}_{-k}} \;
	(D(1/z^i_{-k}))_{-k} \cdot V^i_{-k}}_\infty \leq 
	C' e^{-\lambda (k-1)}.$$
Thus by dominated convergence applied to the sequence
$g_j$, $j = 1, 2,\cdots$ of bounded functions,
$$ g_j := -\sum_{k = 1}^j 
\dfrac{z^i_{-k}}{y^{i,k}_{-k}} \;
	(D(1/z^i_{-k}))_{-k} \cdot V^i_{-k}, $$
we obtain that $g^i := \lim_{j\to\infty} g_j^i$ is also 
a bounded function and that 
Eq. \ref{eqn:iterationLimit}
becomes,
\begin{align}
	\label{eqn:unstableContributionFinalForm}
	\langle D(a^i J_n)\;\cdot\; V^i, \mu\rangle
	 =  - 	
	 \langle a^i J_n \Big(\sum_{k=1}^\infty 
	\dfrac{z^i_{-k}}{y^{i,k}_{-k}}\;
	(D(1/z^i_{-k}))_{-k} \cdot V^i_{-k}\Big), \mu
	\rangle = \langle a^i J_n \: g^i, \mu\rangle.
\end{align}
By assumption, the summation over $n$ of the 
left hand side of Eq. \ref{eqn:unstableContributionFinalForm} converges. This implies, since the series 
on the right hand side must also converge, that $\lim_{n\to \infty}
\langle a^i J_n g^i,\mu\rangle = 0$. On the other hand, this 
limit must be equal to $\langle J, \mu\rangle 
\langle a^i g^i, \mu\rangle$ since time correlations 
must decay to 0 on hyperbolic attractors. Thus, we have 
$\langle a^i g^i, \mu \rangle = 0$ since 
this is true for any bounded function $J$ that satisfies
the assumption that the series $\sum_{n=0}^\infty 
\langle D(a^i J_n)\cdot V^i, \mu \rangle$ converges. 
The derivation of 
Eq.\ref{eqn:unstableContributionFinalForm}
and showing that $g^i \in L^\infty$ complete the proof of \cref{thm:rued}.
Finally, note that the ansatz from section \ref{sec:unstableContribution}
is also valid. To see this, take 
$$ Y^{i,n}_2 = -\sum_{k=1}^\infty 	
	\dfrac{ a^i_{k} J_{n+k}  
 z^i}{y^{i,k}}\;
	D(1/z^i),$$
and set $Y^{i,n} = Y^{i,n}_1 + 	Y^{i,n}_2$. 
\subsection{Computation of the unstable contribution}
\label{subsec:unstableContributionComputation}
To complete the derivation of a regularized unstable contribution, we can rewrite the first 
term in Eq. \ref{eqn:productRuleForDerivative} by using the expression derived in Eq. \ref{eqn:unstableContributionFinalForm}. Thus, we obtain the following regularized unstable contribution,
\begin{align}
	\notag
	\qois_{\rm unstable} &= 
	\sum_{n=0}^\infty\Big( 
	\langle J_n \sum_{i=1}^{d_u} a^i g^i , \mu
	\rangle - 
	\sum_{i=1}^{d_u} \langle J_n Da^i \cdot V^i, \mu \rangle\Big) \\
	\label{eqn:unstableContributionFinal}
	&= \sum_{n=0}^\infty  
	\langle J_n \sum_{i=1}^{d_u} (a^i g^i  - Da^i \cdot V^i), \mu \rangle. 
\end{align}

In order to compute the unstable contribution in the form above, we resort 
to ergodic approximation of the ensemble average. Since we expect the time correlation between the bounded function 
$g := \sum_{i=1}^{d_u} a^i g^i - Da^i\cdot V^i$ and $J$ to decay exponentially 
in a uniformly hyperbolic system, the summation over 
$n$ would converge (to within machine precision of the true unstable 
contribution) with a small number of terms, when compared to $N$, the 
trajectory length used for an ergodic average approximation 
of each term. Thus, 
the computational time for the unstable contribution 
is roughly equal to that for evaluating $g$ along $N$ points. The function 
$g^i$ is naturally in the form of an iteration and thus 
can be obtained along a trajectory, by solving the following set of 
$d_u$ scalar equations, $1\leq i\leq d_u$, setting $\beta^i_{-1} = 0$,
\begin{align}
	\label{eqn:ruedEquation}
		\beta^i_{k+1} &= \beta^i_k/z^i_k + (D(1/z^i_k))_k\cdot V^i_k, \;\;
	k = 0,1,\cdots.
\end{align}
The solutions $\beta^i_{K}$,
$K \in \mathbb{Z}^+$, approximate the scalar function $g^i$, asymptotically. 
That is, for large $K$, $g^i(\varphi_K(u)) 
\approx -\beta^i_K$. Using this approximation of $g^i$ and a finite difference 
approximation of $Da^i\cdot V^i$, we can obtain the function $g$ along 
a primal trajectory starting from a $\mu$-typical phase point $u$. Then,
the numerical approximation of the unstable contribution is the following 
ergodic average,
\begin{align}
	\label{eqn:approximateUnstableContribution}
	\dfrac{d\langle J\rangle}{ds}_{\rm unstable} &\approx
	\dfrac{1}{N}\sum_{n=0}^M\sum_{i=0}^{N-1}J(u_{n+i}) g(u_i). 
\end{align}
Ignoring the numerical errors in the computation of $g$, \cref{lem:stableRuelleFormula} shows that the error in the approximation above decays as 
	${\cal O}(1/\sqrt{N})$. This completes the proof of \cref{thm:derivationOfS3}.

\section{Interpretation of the unstable contribution}
\label{sec:unstableContributionInterpretation}
In the previous section, we rewrote each term of Ruelle's 
formula, which represents the ensemble average of an 
unstable derivative, $\langle DJ_n\cdot \Xu\rangle$,
as a time correlation integral $\langle J_n 
\;g, \mu\rangle$ where $g$ was a bounded distribution 
that we obtained through an iterative procedure. 
In this section, we provide physical intuition for $g$ 
by relating it to the change in the SRB measure due 
to a perturbation along $\Xu$. 

We start with the simple case in which the SRB measure is 
absolutely continuous with respect to Lebesgue measure 
on the whole manifold $M$. For the derivation of 
an S3 formula that assumes the existence of a density 
on the whole manifold, see \cite{nisha-ctr}.
Examples of systems where this is 
true include expanding dynamics on compact attractors 
that have no stable submanifolds. In these cases, the 
volume element $d\mu = \rho\: du$, where $du = \abs{dx_1\cdots dx_d}$
is the standard volume element, 
for some smooth function $\rho:M \to \mathbb{R}^+$. Then,
integration by parts of each term of the unstable contribution 
according to Ruelle's formula, 
can be performed as follows,
\begin{align}
	\notag
	\langle D(J\circ \varphi_n) \cdot \Xu, \mu \rangle 
	&= \int_M {\rm div}( J \circ \varphi_n \: \Xu )   \; \rho\; du 
	- \int_M J \circ \varphi_n \; {\rm div} \Xu\; \rho\; du \\
	\label{eqn:integrationByPartsDensity}
	&= \int_{M} {\rm div}
	(\rho\: J \circ \varphi_n\: \Xu)\; du
	-  \int_M (J \circ \varphi_n) \Big(  
	\dfrac{D\rho}{\rho}\cdot \Xu + {\rm div}(\Xu)\Big)
	\; \rho \; du.
\end{align}
By Stokes theorem, the first term in Eq. \ref{eqn:integrationByPartsDensity}
is a boundary integral that gives the flux of the vector 
field $\Xu$ at the boundary of $M$, which is 0. Thus, in this case, 
the function $g \equiv - (D\rho \cdot \Xu/\rho + {\rm div}(\Xu))$. 
Hence, $g \rho = - {\rm div}(\rho \Xu)$. 
Roughly speaking, Eq. \ref{eqn:integrationByPartsDensity} captures 
the average of the function $J_n$ multiplied by the 
change in the probability distribution. 
Hence, this definition of $\rho g$  matches our intuition since 
locally, the perturbation $\Xu$ stretches the standard volume ($du$) 
by $g \rho = - {\rm div}(\rho \Xu)$. Moreover,
as derived in section \ref{sec:derivation}, it is easy to 
see that $\langle g, \mu \rangle = \int g \rho \; du = 0$. 

Now consider the more general case where the SRB measure 
is not absolutely continuous on the whole manifold. Although the
derivation of Eq. \ref{eqn:integrationByPartsDensity} is not valid, 
an interpretation of the unstable 
contribution can be made using a similar argument. First we choose a
measurable partition, say $\xi$, such that 
each partition element $\xi(u)$ that contains 
$u$ lies within the local unstable manifold at $u$.  
Since conditional measures of SRB measures along 
unstable manifolds are absolutely continuous, 
(see \cite{ly, vaughn}
for constructions of measurable partitions and disintegration of 
SRB measures), we can write the conditional measure of 
$\mu$ on $\xi(u)$ as $\rho_u(w) dw$ for some function $\rho_u$, 
where $dw = \abs{dx_1\cdots dx_{d_u}}$ is the standard Euclidean 
volume element in $d_u$ dimensions. In coordinates, at any $w \in \xi(u)$, 
$\Xu(w)$ can be written as $\Xu(w) = \sum_{k=1}^{d_u} 
v^k(w) \partial_{x_k}$, for some scalar functions $v^k$. 
Using such a disintegration 
of the SRB measure, each term of the unstable contribution will 
then have the following form, taking $a^i = 1$ for simplicity,
\begin{align}
\langle D(J \circ \varphi_n)\cdot \Xu, \mu \rangle 
	&= \int_{M/\xi} \int_{\xi(u)} \sum_{k=1}^{d_u} 
	v^k(w) \dfrac{\partial J \circ \varphi_n}{\partial x_k} 
	  \: \rho_u(w) \; dw\; d\hat{\mu}, 
\end{align}
where $\hat{\mu}$ is the factor measure defined as 
the pushforward of $\mu$ under the projection map 
$\pi:M \to \xi$, which maps a phase point $u$ to $\xi(u)$, hence: $\hat{\mu} 
= \mu \circ \pi^{-1}$ \cite{vaughn}. From this point, we can 
treat the $d_u$-dimensional inner integral analogously to the previous 
case of the expanding map. In particular, we can 
apply integration by parts to the inner integral, and analogous
to Eq. \ref{eqn:integrationByPartsDensity}, we obtain, 
\begin{align}
	\notag
\langle D(J \circ \varphi_n)\cdot \Xu, \mu \rangle 
	&=  - \int_{M/\xi} \int_{\xi(u)} 
		J\circ\varphi_n \:  
	\Big(\sum_{k=1}^{d_u} 
	v^k \dfrac{\partial\rho_u}{\partial x_k}\Big)  
	   \; dw\; d\hat{\mu} \\
	 &- \int_{M/\xi}  \int_{\xi(u)} 
	 J \circ \varphi_n  
 	\sum_{k=1}^{d_u} 
	\dfrac{\partial v^k }{\partial x_k} 
	  \: \rho_u(w) \; dw\; d\hat{\mu}. 
\end{align}
Here, again we obtain an integral representing a flux term 
on the boundaries of $\xi(u)$, the integral over $u$ of which 
is 0, due to cancellations \cite{ruelle1}. Then, comparing 
with Eq. \ref{eqn:unstableContributionFinal}, we can 
see that 
$$ g \rho_u \equiv -\sum_{k=1}^{d_u} 
(v^k  \dfrac{\partial\rho_u}{\partial x_k} +  	
\dfrac{\partial v^k }{\partial x_k} \rho_u) = 
-\sum_{k=1}^{d_u}  \dfrac{\partial v^k \rho_u}{\partial x_k},$$
which is again a divergence of $\rho_u \Xu$ on pieces of unstable manifolds.
While this provides an intuitive interpretation of $g$, 
it does not lead to a straightforward computation since 
the densities on the unstable manifolds, denoted $\rho_u$ above, 
are unknown. This justifies resorting to an iterative procedure 
that we did in section \ref{sec:derivation}, since the formula 
in \ref{eqn:unstableContributionFinal} only makes use 
of known quantities computed along trajectories. 
The other primary motive that Eq. \ref{eqn:unstableContributionFinal}
fulfills is that the algorithm 
must not involve discretization of the phase space, but remain 
a Monte Carlo method of computing integrals, which have convergence 
rates that are independent of the dimension of the phase space.

\section{Comments on S3 computation}
\label{sec:algorithm}
Revisiting the sketch of the proof in \cref{sec:main}, the first term 
of Eq. \ref{eqn:newFormula} appears as is from the split 
Ruelle's formula in Eq. \ref{eqn:splitRuelleResponseFormula}. 
Piecing together all the work carried out in \cref{sec:unstableContribution},
an exponentially converging, regularized expression for the 
unstable contribution, the second term of Eq. \ref{eqn:splitRuelleResponseFormula}, 
is crystallized into Eq.\ref{eqn:unstableContributionFinal}.
Putting these two contributions together in Eq. \ref{eqn:splitRuelleResponseFormula}
completes the proof of \cref{thm:derivationOfS3}. Moreover, the error 
in the ergodic approximation of Eq. \ref{eqn:unstableContributionFinal}
decays as ${\cal O}(1/\sqrt{N})$ using an $N$-term ergodic average: this 
follows from \cref{lem:stableRuelleFormula}. Thus, combining 
this result with \cref{prop:stableContribution}, 
the overall S3 formula has an error that decays as a typical 
Monte Carlo integration, as we sought.

We now briefly discuss a na\"ive implementation of the S3 formula, 
postponing an efficient algorithmic implementation (see \cite{nisha-aiaa}
for a superficial report in the case $d_u = 1$) to a future work. Using 
a generic initial condition $u$ sampled according to $\mu$ on 
the attractor, a primal trajectory of length $N$, chosen large enough 
for convergence of ergodic averages, is obtained from the solution 
of Eq. \ref{eqn:primal}. Along the primal trajectory, 
we use Ginelli {\emph et al.}'s 
algorithm (see \cite{ginelli} for 
the algorithm and \cite{florian} for more details and a new 
convergence proof) to obtain $V^i_n, 0\leq n\leq N-1$, $1\leq i\leq d_u$. 
Additionally, we also apply Ginelli {\emph et al.}'s 
algorithm to the adjoint cocycle (dual of $\T$) to obtain a set of adjoint 
CLVs, also normalized at each $u$, and
denoted as $W^i_n, 0\leq n\leq N-1$, $1\leq i\leq d_u$. Note that 
since $\Es(u) \perp {\Eu}^*(u)$, $\Xs\cdot W^i = (X - \Xu)\cdot W^i = 0$. 
This fact is used in order to obtain the stable and unstable components 
$\Xu_n$ and $\Xs_n$ along a trajectory. 

To realize the stable contribution in 
practice, the iterative equation referred to 
as the stable tangent equation (Eq. \ref{eqn:stableTangentEquation}) is 
used, as suggested in \cref{subsec:stableContributionComputation}. For the 
unstable contribution, Eq. \ref{eqn:unstableContributionFinal} is 
computed as an ergodic average. For the computation of each $g^i$, 
Eq. \ref{eqn:ruedEquation} is used as suggested in 
\cref{subsec:unstableContributionComputation}. In the numerical examples 
discussed below, we use both analytical expressions 
and approximate finite difference calculations to obtain $z^i$ and 
$D(1/z^i)\cdot V^i$, along trajectories. An algorithm for computation 
of derivatives of scalar functions along CLVs will be discussed in a 
future work, along with an adjoint (reverse-mode) algorithm for S3, in the 
interests of serving a high-dimensional parameter space.

Before we close this section, we comment on the uniform hyperbolicity 
assumption. Firstly, note that the assumption has been used to obtain 
the desired error convergence of both the stable and unstable contribution; 
the split of Ruelle's formula itself does not require uniform hyperbolicity.
In particular, in the stable contribution, we used the uniform rates of contraction 
of stable vectors, in \cref{prop:stableContribution}. In the unstable contribution 
derivation (i.e., in proving \cref{thm:rued}), and in fact in 
Ruelle's linear response formula itself, we use the existence of an 
SRB measure, which is guaranteed on a compact uniformly hyperbolic attractor. 
To obtain 
the error convergence of the unstable contribution, 
we used exponential decay of correlations and 
the CLT, which only hold on a hyperbolic attractor, for 
H\"older continuous functions of some positive H\"older exponent. 
(see \cite{liverani} and 
section 6 of \cite{young}). While the function $J$ is in $C^2$ and hence 
in a H\"older class, we have only shown boundedness of $g^i$, but 
assumed exponential decay of correlations with $J$. However, 
if the two functions satisfy the finite first moment condition 
of Chernov (see Corollary 1.7 of \cite{chernov}), the assumption 
of CLT and exponential decay of correlations would be valid. Moreover, 
besides these caveats, the assumption of uniform hyperbolicity itself 
could appear restrictive enough to affect the applicability of our results 
to high-dimensional dynamical systems encountered in practice. In this regard, 
it is worth mentioning that in a widely accepted hypothesis due to Gallovotti 
and Cohen (\cite{gallavotti}, see also \cite{ruelle-general} for more 
comments on this hypothesis), many fluid systems, and more generally, 
statistical mechanical models, behave as if they were uniformly hyperbolic.
Several recent studies also provide supportive evidence, wherein 
numerical methods that, strictly speaking, assume some hyperbolicity for 
their derivation and convergence, work well in high-dimensional real-life 
models (see \cite{chekroun} for an example from climate dynamics and 
\cite{angxiu-jfm} for a turbulent fluid flow simulation).
\section{Numerical examples}
\label{sec:results}
\subsection{Smale-Williams solenoid map}
\label{sec:solenoid}
The Smale-Williams solenoid map is a classic 
example of low-dimensional hyperbolic dynamics. 
It is a three-dimensional map given by
$\varphi^s(u) = [s_1 + \dfrac{r-s_1}{4} + \dfrac{\cos(\theta)}{2},
		2 \theta + \dfrac{s_2}{4}\sin(2\pi\theta),
		\dfrac{z}{4} + \dfrac{\sin\theta}{2}]^T$,
where $u := [r,\theta, z]^T$ in cylindrical coordinates. 
The attractor is a subset of the solid torus at the 
reference values of $s_1= 1.4$ and $s_2 = 0$. The probability 
distribution on the attractor is an SRB distribution 
\cite{ruelle,young} that has a density on the unstable manifolds. 
\begin{figure}
\includegraphics[height=7cm,width=\textwidth]{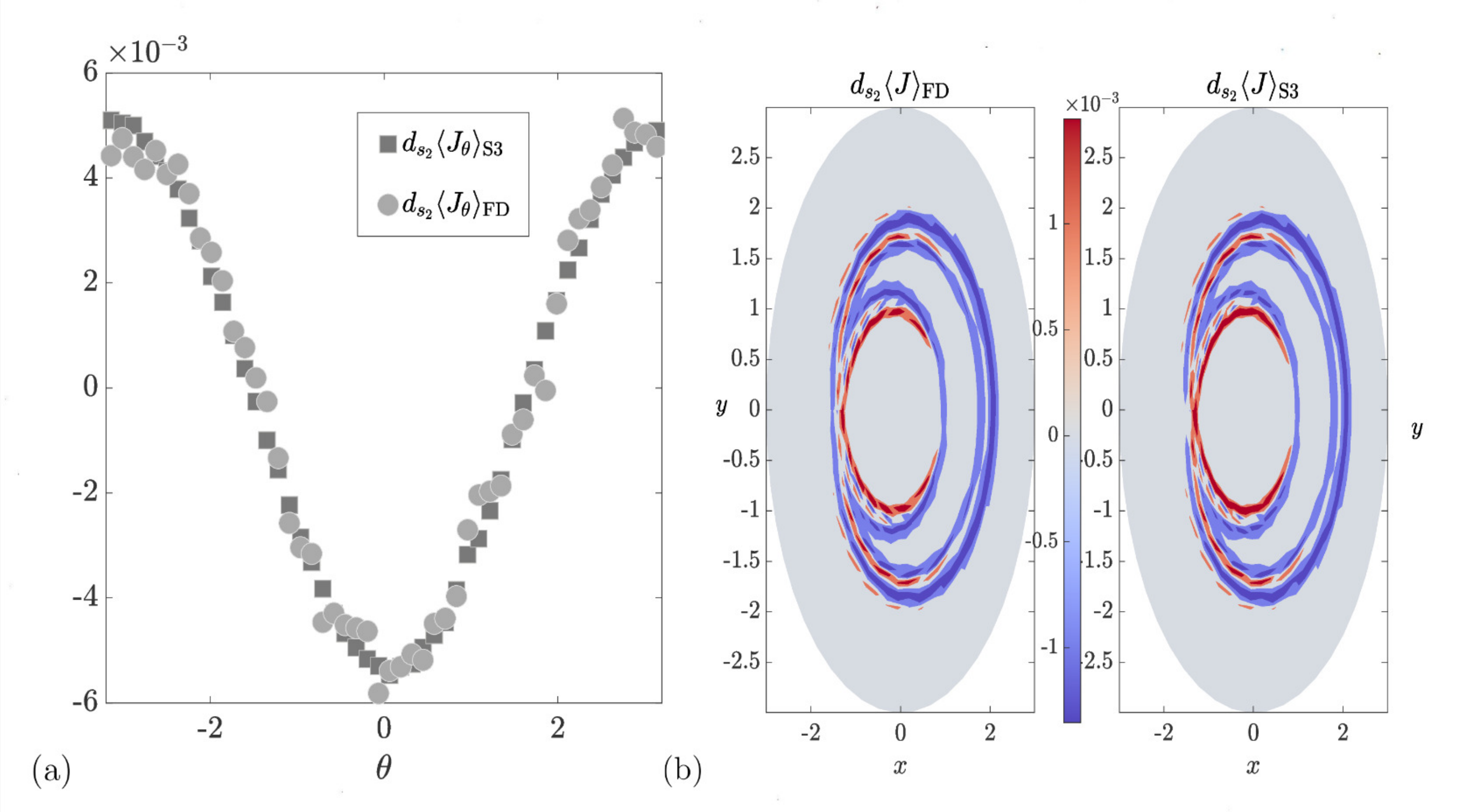}

\caption{Comparison of the sensitivities computed with S3 to 
	finite-difference for the solenoid map in Section \ref{sec:solenoid}.
	(a) $J$ is a set of 
	two-variable nodal basis functions along $r$ and $\theta$ axes.
	(b) $J_\theta$ is a set of nodal basis functions 
	along $\theta$ axis. }
\label{fig:solenoid}
\end{figure}
In this map, $r$ and $z$ directions form a basis for 
the stable subspace at each point (and the orthogonal
$\theta$ direction forms a basis for the adjoint unstable subspace).
Applying a perturbation to $s_1$ causes a stable
perturbation, i.e., the unstable contribution is zero,
since it affects only the $r$ coordinate. 
On the other hand, perturbing $s_2$ leads to a nonzero
unstable contribution.
A set of nodal basis functions along $r$ and $\theta$ 
is chosen to be the objective function. We use 
a na\"ive implementation of the S3 algorithm presented in 
\cref{sec:algorithm}. In order to validate the S3 computation, 
we compare the sensitivities $(d\langle J \rangle/ds_2)$
with finite-difference results generated using 10 billion
Monte Carlo samples on the attractor. The sensitivities
to the parameter $s_2$ are shown in Figure 1(a). 
In Figure 1(b), the objective function is a set of nodal basis functions
along the $\theta$ direction. From Figures 1(a,b),
we see close agreement between the sensitivities computed with (a more 
general version of) S3 and, finite-difference results, thus validating
both the stable and unstable parts of the S3 algorithm.
\subsection{Kuznetsov-Plykin map}
\begin{figure}
\includegraphics[height=6cm,width=\textwidth]{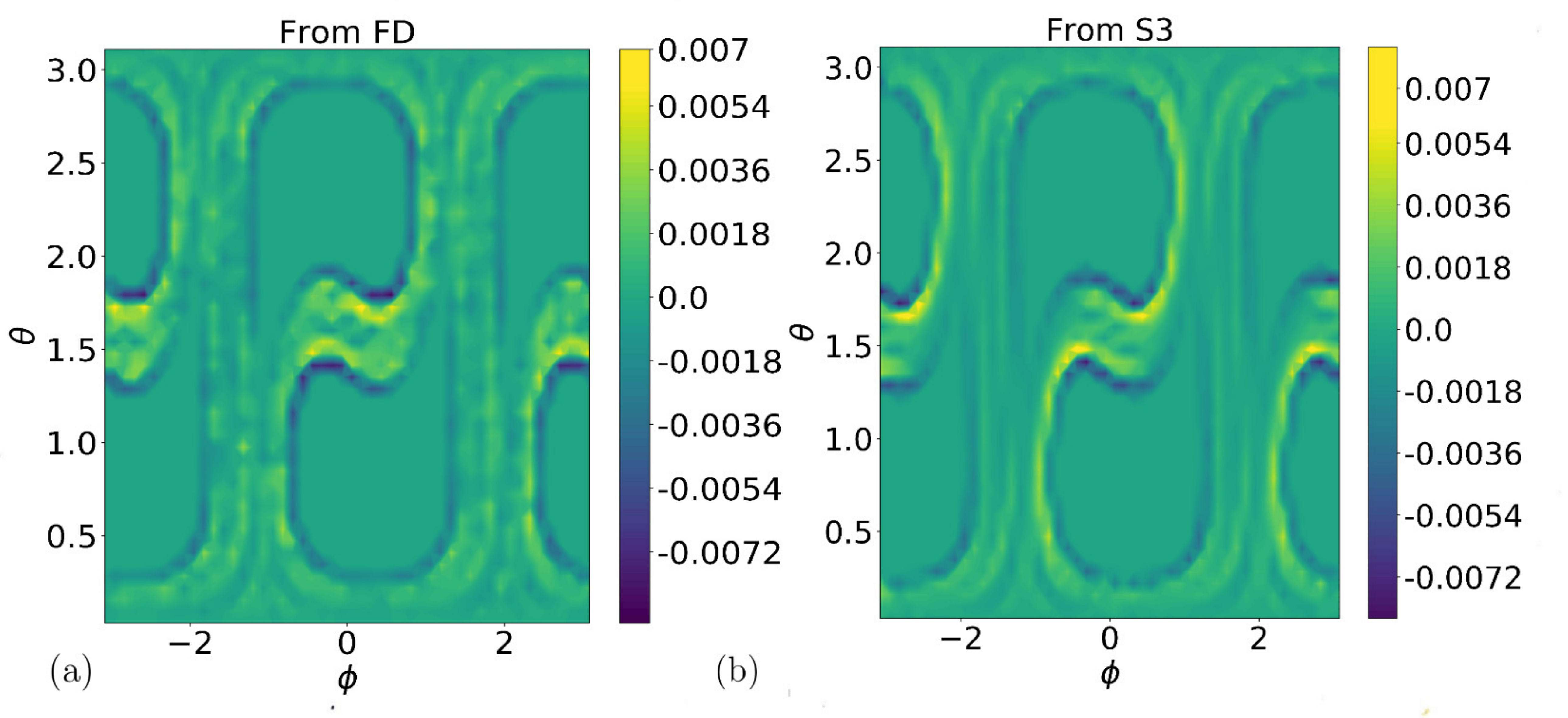}
\caption{Comparison of the sensitivities of 
	the nodal basis functions 
along the $\theta$ and $\phi$ axes to the parameter $s_2$
	obtained for the Kuznetsov-Plykin attractor using (a) finite difference and (b) the S3 algorithm.}
\label{fig:plykin}
\end{figure}
As a second test case for S3, we consider the Kuznetsov-Plykin 
map as defined by \cite{kuznetsov}, which describes
a sequence of rotations and translations on the 
surface of the three-dimensional unit sphere.
The two parameters we choose to vary are $s_1 := \epsilon$ 
and $s_2 := \mu$, 
which are defined by \cite{kuznetsov}. 
The map is given by $\varphi_{n+1}^s(u) = f_{-1,-1}\circ f_{1,1}(u)$
where $u = [x_1,x_2,x_3]^T \in \mathbb{R}^3$. For the function $f_{\cdot,\cdot}$ and further details regarding the 
hyperbolicity of the system, the reader is 
referred to \cite{kuznetsov}. The probability distribution 
on the attractor again is again of SRB type, with the 
existence of a density along the unstable manifolds.
We again use a na\"ive implementation of the S3 formula to compute the 
sensitivities as in the case of the solenoid map in 
Section \ref{sec:solenoid}.
The objective function $J$ is a set of nodal basis functions 
along the $\theta$ and $\phi$
spherical coordinate axes. The finite-difference sensitivities were 
computed with the
central difference around the reference value of $s_2 = 1$ 
by means of 10 billion independent
samples on the attractor. The results from S3 agree well with finite-difference sensitivities 
as shown in Figure \ref{fig:plykin}.

\section{Conclusions}
\label{sec:conclusions}
We have presented a tangent space-split sensitivity formula to compute
the derivatives of statistics to system parameters in chaotic dynamical systems. 
The algorithm to implement the formula 
requires the computation of a basis for the tangent and 
adjoint unstable subspaces along a long trajectory. The stable contribution to the overall sensitivity can be 
efficiently computed by a conventional tangent/adjoint computation just 
as in nonchaotic
systems. The unstable contribution has been rederived to be expressed as 
ergodic average that yields a Monte Carlo convergence. The numerical examples described in 
Section \ref{sec:results} satisfy the simplifying assumptions of 
uniform hyperbolicity that were made in the derivation.
They show close agreement with finite-difference results, serving as a 
proof-of-concept for the new formulation. In order to make the new formulation 
applicable to a high-dimensional problem, more work is needed toward an efficient 
implementation, particularly for the terms in Eq. \ref{eqn:ruedEquation}.

\section{Appendix}
\begin{proposition}
	\label{prop:stableContribution}
The error in an $N$-term ergodic approximation using the stable 
	tangent equation \ref{eqn:stableTangentEquation},
	of the stable contribution, decays as ${\cal O}(1/\sqrt{N})$. That is,
	$$e_N := \abs{\dfrac{d\langle J \rangle}{ds}_{\rm stable} - 
	\dfrac{1}{N} \sum_{n=0}^{N-1} DJ(u_n) \cdot\zetas_n} \leq \cs/\sqrt{N}, \cs > 0.$$
\end{proposition}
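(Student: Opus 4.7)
The plan is to recognize the solution of the stable tangent equation as a truncation of a stationary, bounded vector field evaluated along the trajectory, and then reduce the error to a classical ergodic-average deviation plus an exponentially small correction. Unrolling the recursion in Eq.~\eqref{eqn:stableTangentEquation} from $\zetas_{-1}=0$ gives the explicit formula
\begin{equation*}
\zetas_n \;=\; \sum_{k=0}^{n} \T(u_k, n-k)\, \Xs(u_k),
\end{equation*}
which is a one-sided truncation of the ``pulled-back'' field
\begin{equation*}
\eta(u) \;:=\; \sum_{k=0}^{\infty} \T(\varphi_{-k}(u), k)\, \Xs(\varphi_{-k}(u)).
\end{equation*}
Uniform hyperbolicity yields $\norm{\T(v, k)\Xs(v)}\leq C e^{-\lambda k}\norm{\Xs}_\infty$, so the series defining $\eta$ converges absolutely and defines a bounded vector field on $M$. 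Moreover, writing $\eta(u_n)-\zetas_n = \sum_{j=-\infty}^{-1}\T(u_j,n-j)\Xs(u_j)$ and applying the same bound term by term gives $\norm{\zetas_n - \eta(u_n)} \leq C' e^{-\lambda n}$ uniformly in $u$.

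First I would verify that $\langle DJ \cdot \eta, \mu\rangle$ coincides with the stable contribution \eqref{eqn:stableContributionRestatement}. Expanding the series for $\eta$ inside the expectation, and applying the change of variable $v=\varphi_{-k}(u)$ together with $\varphi$-invariance of $\mu$ in each summand, yields
\begin{equation*}
\langle DJ \cdot \eta, \mu\rangle
\;=\; \sum_{k=0}^\infty \langle (DJ)_k \cdot \T(\cdot,k)\Xs, \mu\rangle
\;=\; \sum_{k=0}^\infty \langle D(J\circ\varphi_k)\cdot \Xs, \mu\rangle,
\end{equation*}
with the exchange of sum and integral justified by the uniform exponential bound on summands.

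Then I would split the error as
\begin{equation*}
e_N \;\leq\; \Big|\langle DJ\cdot\eta, \mu\rangle
- \frac{1}{N}\sum_{n=0}^{N-1} DJ(u_n)\cdot\eta(u_n)\Big|
\;+\; \frac{\norm{DJ}_\infty}{N}\sum_{n=0}^{N-1}\norm{\zetas_n - \eta(u_n)}.
\end{equation*}
The second (truncation) term is bounded by $(\norm{DJ}_\infty C'/N)\sum_{n\geq 0} e^{-\lambda n} = \mathcal{O}(1/N)$, hence $o(1/\sqrt{N})$. The first term is the deviation of an ergodic average from its mean for the bounded scalar observable $F(u):=DJ(u)\cdot\eta(u)$. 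On a uniformly hyperbolic attractor with an SRB measure, the central limit theorem and its associated $1/N$ variance bound hold for H\"older continuous observables, which produces the desired $\cs/\sqrt{N}$ rate.

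The main obstacle is establishing enough regularity of $\eta$ to invoke that CLT/variance estimate. The stable bundle $\Es$ is typically only H\"older continuous in uniformly hyperbolic systems, so $\Xs$ inherits H\"older (not $C^2$) regularity from the smooth $X$, and each summand $\T(\varphi_{-k}(\cdot),k)\Xs(\varphi_{-k}(\cdot))$ is H\"older with a constant that must be controlled against the $e^{-\lambda k}$ geometric decay. A standard argument using uniform contraction along $\Es$ and distortion bounds for $\T(\cdot,k)|_{\Es}$ shows the series for $\eta$ converges in an appropriate H\"older norm; with $F$ H\"older, Chernov's variance bound then delivers the stated constant $\cs$.
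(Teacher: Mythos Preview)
Your argument is correct and rests on the same three ingredients as the paper's proof: the explicit unrolling $\zetas_n=\sum_{k=0}^n \T(u_k,n-k)\Xs(u_k)$, the uniform exponential contraction along $\Es$, and a CLT for a bounded observable built from pulled-back stable perturbations. The difference is organizational. The paper retains a finite truncation depth $M$, splits the double sum $\frac{1}{N}\sum_n DJ(u_n)\cdot\zetas_n$ into three pieces (the last $M{+}1$ contributions, a boundary correction for $n<M$, and the deep tail $i\leq n{-}M{-}1$), bounds the latter two by hyperbolicity, applies the CLT to the truncated observable $\sum_{i=0}^M DJ\cdot \T(\varphi_{-i}(\cdot),i)\Xs_{-i}$, and only then sends $M\to N{-}1$. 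You instead pass to the limit $M\to\infty$ at the outset to define the stationary field $\eta$, which collapses the analysis to a single fixed observable $F=DJ\cdot\eta$ and a two-term split (one $\mathcal{O}(1/N)$ truncation error plus one CLT deviation). Your packaging is tidier, and you are also more careful about the regularity hypothesis needed to invoke the CLT: the paper asserts continuity of the integrand and cites Chernov, whereas you correctly note that $\Es$, and hence $\Xs$ and $\eta$, is generically only H\"older on a uniformly hyperbolic attractor, and that summability of the H\"older seminorms against the geometric decay is what actually closes the argument.
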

\begin{proof}
	It is easy to check that $\zetas_n := \sum_{i=0}^n \T(u_i, n-i) \Xs_i$ 
	satisfies Eq. \ref{eqn:stableTangentEquation}. So the 
	approximation to the stable contribution can be written, for some $M \leq N-1$ as, where $\sum_i^j = 0$ if $i > j$,
	\begin{align*}
		&\dfrac{1}{N}\sum_{n=0}^{N-1} \sum_{i=0}^n DJ(u_n)\cdot 
		\T(u_i, n-i) \Xs_i = \dfrac{1}{N}\sum_{n=0}^{N-1} 
		\sum_{i=n-M}^n DJ(u_n)\cdot 
		\T(u_i, n-i) \Xs_i \\
		&- \dfrac{1}{N}\sum_{n=0}^{M-1} 
		\sum_{i=n-M}^{-1} DJ(u_n)\cdot 
		\T(u_i, n-i) \Xs_i + 
		\dfrac{1}{N}\sum_{n=M+1}^{N-1} 
		\sum_{i=0}^{n-M-1} DJ(u_n)\cdot 
		\T(u_i, n-i) \Xs_i. 
	\end{align*}
	Thus,  
	\begin{align*}
		&e_N \leq
		\abs{ \dfrac{1}{N}\sum_{n=0}^{M-1} 
		\sum_{i=1}^{M-n} DJ(u_n)\cdot 
		\T(u_{-i}, n+i) \Xs_{-i}}  + 
		\abs{\dfrac{1}{N}\sum_{n=M+1}^{N-1} 
		\sum_{i=0}^{n-M-1} DJ(u_n)\cdot 
		\T(u_i, n-i) \Xs_i}\\
		&+\abs{ \dfrac{d\langle J \rangle}{ds}_{\rm stable} -  
		\dfrac{1}{N}\sum_{n=0}^{N-1} 
		\sum_{i=n-M}^n DJ(u_n)\cdot 
		\T(u_i, n-i) \Xs_i}.
	\end{align*}
	Under the uniform hyperbolicity assumption, we know that $\norm{\T(u,n) 
	\Xs(u)} \leq C e^{-\lambda n}$. Moreover, we assume that $\norm{DJ}, 
	\norm{\Xs}$ are bounded functions. Hence, where $\gamma:=
	\sum_{i=0}^\infty e^{-\lambda i}$,
	\begin{align}
		\notag
		e_N &\leq
		\dfrac{C \gamma^2 \norm{DJ}_\infty \norm{\Xs}_\infty}{N} 
		+ \dfrac{C' \gamma e^{-\lambda (M+1)} (N - (M+1)) 
		\norm{DJ}_\infty \norm{\Xs}_\infty}{N} \\
		\label{eqn:errorBound}
		&+\abs{ \dfrac{d\langle J \rangle}{ds}_{\rm stable} -  
		\dfrac{1}{N}\sum_{n=0}^{N-1} 
		\sum_{i=n-M}^n DJ(u_n)\cdot 
		\T(u_i, n-i) \Xs_i}.
	\end{align}
	To obtain an upper bound for the third term, again we 
	use the uniform hyperbolicity assumption. So,
	the integrand in the $n$th summand 
	of the stable contribution (Eq. \ref{eqn:splitRuelleResponseFormula}) 
	satisfies $\norm{(DJ)_n\cdot \T(\cdot, n)\Xs}_\infty \leq C 
	\norm{DJ}_\infty \norm{\Xs}_\infty e^{-\lambda n}$. Hence 
	$\norm{\sum_{n=0}^M (DJ)_n \cdot \T(\cdot, n)\Xs}_\infty \leq  C 
	\norm{DJ}_\infty \norm{\Xs}_\infty \gamma$, for any $M$ and 
	by dominated convergence, $\sum_{n=0}^\infty
	(DJ)_n \cdot \T(\cdot, n)\Xs \in L^1(\mu)$, 
	and the stable contribution can be written as, 
	\begin{align}
		\notag
		\dfrac{d\langle J \rangle}{ds}_{\rm stable} 
		&= \langle \sum_{i=0}^\infty
		(DJ)_i \cdot \T(\cdot, i)\Xs, \mu\rangle.
	\end{align}
	Thus the third term in Eq. \ref{eqn:errorBound} has the following 
	bound, 
	\begin{align}
		\notag
		&\abs{ \dfrac{d\langle J \rangle}{ds}_{\rm stable} -  
		\dfrac{1}{N}\sum_{n=0}^{N-1} 
		\sum_{i=n-M}^n DJ(u_n)\cdot 
		\T(u_i, n-i) \Xs_i} \\
		\notag
		&\leq \abs{\langle \sum_{i=0}^M
		(DJ)_i \cdot \T(\cdot, i)\Xs, \mu\rangle -
		\dfrac{1}{N}\sum_{n=0}^{N-1} 
		\sum_{i=n-M}^n DJ(u_n)\cdot 
		\T(u_i, n-i) \Xs_i} \\
		\label{eqn:secondLine}
		&+   C \gamma 
	\norm{DJ}_\infty \norm{\Xs}_\infty e^{-\lambda M},
	\end{align}
	where the second term on the right hand side of 
	Eq. \ref{eqn:secondLine} again uses uniform hyperbolicity. 
	Applying measure preservation in each of the integrals 
	in the first term of Eq. \ref{eqn:secondLine},
	\begin{align}
		\notag
		&\abs{ \dfrac{d\langle J \rangle}{ds}_{\rm stable} -  
		\dfrac{1}{N}\sum_{n=0}^{N-1} 
		\sum_{i=n-M}^n DJ(u_n)\cdot 
		\T(u_i, n-i) \Xs_i} \\
		\notag
		&\leq \abs{
			 \langle \sum_{i=0}^M
		DJ \cdot \T(\varphi_{-i}(\cdot), i)\Xs_{-i}, 
		\mu\rangle -
		\dfrac{1}{N}\sum_{n=0}^{N-1} 
		\sum_{i=n-M}^n DJ(u_n)\cdot 
		\T(u_i, n-i) \Xs_i} \\
		\label{eqn:thirdLine}
		&+   C \gamma 
	\norm{DJ}_\infty \norm{\Xs}_\infty e^{-\lambda M}.
	\end{align}
	The integrand in \ref{eqn:thirdLine} is continuous on $M$
	since $\T(u,n):\Es(u) \to \Es(u_n)$
	is a continuous map and, $DJ(u): T_u M \to \mathbb{R}$ and  
	$\Xs : M \to \Es$ are continuous by assumption.
	Then we expect that $\sum_{i=0}^N DJ\cdot \T(\varphi_{-i}, i) \Xs_{-i}$ obeys 
	the central limit theorem \cite{chernov}. 
	Using this in Eq. \ref{eqn:thirdLine}, Eq. \ref{eqn:errorBound}, 
	gives, letting $M \to N-1$,
	\begin{align}
		\label{eqn:errorStable}
		e_N &\leq
		\dfrac{C \gamma^2 \norm{DJ}_\infty \norm{\Xs}_\infty}{N} 
		+\dfrac{{\rm var}[\sum_{n=0}^N DJ_n \cdot \Xs]}
		{\sqrt{N}} +  C \gamma 
		\norm{DJ}_\infty \norm{\Xs}_\infty e^{-\lambda (N-1)} 
		\in {\cal O}(1/\sqrt{N}).
	\end{align}
	\end{proof}
\begin{lemma}
	\label{lem:limit}
	If the pointwise limit of a sequence of bounded functions 
	$\left\{f_n\right\}_{n=0}^\infty \subset L^1(\mu)$ 
	vanishes, i.e., $\lim_{n\to\infty} f_n(u) = 0, u\; 
	\mu-{\rm a.e.},$
	then the sequence $\langle Df_n\cdot V, \mu\rangle$ converges to 
	0, when $V$ is an unstable vector field differentiable along $\Eu$. 
\end{lemma}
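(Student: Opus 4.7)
The plan is to rewrite $\langle Df_n\cdot V,\mu\rangle$ as an ordinary $L^1$-pairing of $f_n$ against a fixed bounded weight, so that dominated convergence finishes the argument. First I would decompose $V$ in the unstable CLV basis, $V=\sum_{i=1}^{d_u} b^i V^i$, where the scalar fields $b^i$ inherit differentiability along $\Eu$ from the hypothesis on $V$ together with the smoothness of the unstable foliation and its CLV sub-foliations along themselves in uniformly hyperbolic systems. Applying the Leibniz rule,
\begin{equation*}
\langle Df_n\cdot V,\mu\rangle \;=\; \sum_{i=1}^{d_u}\bigl(\langle D(b^i f_n)\cdot V^i,\mu\rangle \;-\; \langle f_n\,Db^i\cdot V^i,\mu\rangle\bigr).
\end{equation*}
The second family of integrals is already in the desired form: each $Db^i\cdot V^i$ is bounded, so $|f_n\,Db^i\cdot V^i|$ is uniformly dominated and tends to $0$ pointwise a.e., and dominated convergence disposes of it.

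For the first family the main step is an integration by parts along local unstable manifolds, exploiting the SRB structure of $\mu$. Choosing a measurable partition $\xi$ whose atoms lie in local unstable manifolds, the Rokhlin disintegration writes $d\mu = \rho_u(w)\,dw\,d\hat\mu(u)$ with $\rho_u$ the positive density of the conditional measure on $\xi(u)$ with respect to the unstable Riemannian volume $dw$ (cf.\ \cite{ly,vaughn}). Inside each atom $V^i$ is tangent to the atom, so classical $d_u$-dimensional integration by parts gives
\begin{equation*}
\int_{\xi(u)} D(b^i f_n)\cdot V^i\,\rho_u\,dw \;=\; \text{(boundary flux)} \;-\; \int_{\xi(u)} b^i f_n\,\tilde g_i\,\rho_u\,dw,\qquad \tilde g_i := \frac{\partial_u(\rho_u V^i)}{\rho_u}.
\end{equation*}
Integrating against the factor measure $\hat\mu$ on $M/\xi$, the boundary fluxes cancel atom-by-atom by the same mechanism used in \cref{sec:unstableContributionInterpretation} and in \cite{ruelle1}, leaving $\langle D(b^i f_n)\cdot V^i,\mu\rangle = -\langle b^i f_n\,\tilde g_i,\mu\rangle$. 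A second application of dominated convergence, with the uniform bound $|b^i f_n \tilde g_i|\le \|b^i\|_\infty\sup_n\|f_n\|_\infty\|\tilde g_i\|_\infty$ and the pointwise vanishing of $f_n$, delivers the limit $0$.

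The hard part is justifying the regularity input behind $\tilde g_i$: boundedness of $\tilde g_i$ reduces to the statement that $\log\rho_u$ is Lipschitz along unstable manifolds with a uniform Lipschitz constant, a classical fact for $C^2$ uniformly hyperbolic systems (Ledrappier--Young, Pesin) but not one derived within this paper. One must also verify the flux cancellation carefully, since $\xi$ is only measurable (not a topological partition). Note that I deliberately avoid closing the argument by substituting into \cref{thm:rued} or the iteration of Eqs.\ \ref{eqn:zAbsorption}--\ref{eqn:unstableContributionFinalForm}, which would be circular; the lemma is precisely what certifies that the ``limit'' term in \ref{eqn:iterationLimit} vanishes, so its proof must rely on an external tool, and the SRB disintegration is the natural choice.
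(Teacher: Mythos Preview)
Your core strategy---disintegrate $\mu$ along a measurable unstable partition, integrate by parts on each atom, kill the boundary flux via the Ruelle cancellation, and finish with dominated convergence---is exactly the paper's proof. The one substantive difference is that you first decompose $V=\sum_i b^i V^i$ in the CLV basis and peel off the $f_n\,Db^i\cdot V^i$ terms, whereas the paper works directly with $V$ expressed in local Lyapunov-adapted coordinates $\Phi_u$ on $\xi(u)$, writing $V=\sum_k v_k\,\partial_{x_k}$ and integrating by parts once. Your detour buys nothing and costs a regularity claim: differentiability of the individual coefficients $b^i$ along $\Eu$ requires the finer Oseledets sub-bundles $E^i\subset\Eu$ to be differentiable along unstable leaves, which is not a standard consequence of uniform hyperbolicity (only the full $\Eu$ is). The paper's direct route avoids this issue entirely, since only the smoothness of the unstable leaves themselves and the differentiability of $V$ along $\Eu$ are used. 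Otherwise your identification of the delicate points (Lipschitz regularity of $\log\rho_u$, care with flux cancellation for a merely measurable partition) and your warning against circular use of \cref{thm:rued} are on target.
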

\begin{proof}
	Let $\xi$ be a measurable partition of $M$ such that for $\mu$-a.e.
 	$u$, the element of the partition containing $u$, denoted $\xi(u)$,
	is contained in a local unstable manifold of $u$, i.e., 
	$\xi(u) \subset W^{u}(u)$. We assume a $\xi$ constructed according
	to Ledrappier-Young's Lemma 3.1.1 \cite{ly}, and 
	also use the Lyapunov-adapted coordinates introduced there. 
	In a neighborhood of each $u$, let 
	$\Phi_u:M \to [-\delta,\delta]^{d_u} \oplus [-\delta,\delta]^{d_s}$ be the adapted coordinate system such 
	that $\Eu(u), \Es(u)$ are identified with $\mathbb{R}^{d_u}, 
	\mathbb{R}^{d_s}$ respectively.
	Ledrappier-Young prove the existence of a measurable 
	function $\delta$ depending on $u$ in order for 
	$\xi$ to be a measurable partition of $M$; note however 
	that in our more specific case of a uniformly hyperbolic 
	compact attractor, we can choose a $\delta$ independently
	of $u$ (see 
	section 6.2 of \cite{katok}).
	
	Furthermore, the image of $W^u(u)$ under $\Phi_u$ is 
	a neighborhood of the 
	origin in $\mathbb{R}^{d_u}$, i.e., 
	the last $d_s$ coordinates of $\Phi_u(W^u(u))$ 
	are 0. Let the image of $\xi(u)$ under 
	this map be $B_u \subset [-\delta,\delta]^{d_u}$. 
	If $x_1,\cdots,x_{d_u}$ 
	are Euclidean coordinate functions in $\mathbb{R}^{d_u}$, the 
	pushforward of $V|_{\xi(u)}(w) \in \Eu(w)$ through $\Phi_u$
	can be expressed as $V(w) = \sum_{k=1}^{d_u} v_k(x)
	\partial_{x_k}$, $w \in \xi(u)$, and $x = \Phi_u(w)$, for 
 	differentiable functions $v_k:B_u\to \mathbb{R}$. 
Since $\xi(u)$ is a measurable partition, 
we can apply disintegration of $\mu$ on $\xi$, which 
gives for some measurable set $E$ that 
$\mu(E) = \int_{M/\xi} 
\int_{\xi(u)} \mathbbm{1}_E(w) \; d\mu_{\xi(u)}(w)
	d\hat{\mu}(\xi(u))$ \cite{vaughn, young}. Here the conditional measures 
of $\mu$ on $\xi(u)$ are denoted as $\mu_{\xi(u)}$ 
and the factor measure on the quotient space 
$M/\xi$ is denoted as $\hat{\mu}$. Given that 
$\mu$ is an SRB measure of $\varphi$, 
the conditional measure $\mu_{\xi(u)}$ 
is absolutely continuous with respect to 
${d_u}$-dimensional volume measure,
at $\mu$ almost every $u$; let the corresponding 
probability density 
function be denoted by $\rho_u: B_u \to \mathbb{R}^+$.
Using this setup, each term of the sequence of 
our interest is, where $dx = |dx_1\cdots dx_{d_u}|$ is the 
standard $d_u$-dimensional volume element, and 
$\tilde{f}_n := 
f_n \circ \Phi_u$,
\begin{align}
	\langle D f_n \cdot V, \mu\rangle = 
	\int_{M/\xi} 
	\int_{B_u}   
	\sum_{k=1}^{d_u} v_k(x)\;
	\dfrac{\partial \tilde{f}_n}{\partial x_k}
(x) \: 
	\rho_u(x)\; dx \; d\hat{\mu}.
\end{align}
Rewriting the integrand we obtain,
\begin{align}
	\notag
	\langle D f_n \cdot V, \mu\rangle = 
	&\int_{M/\xi} 
	\int_{B_u} \sum_{k=1}^{d_u} 
	\dfrac{\partial}{\partial x_k} (\tilde{f}_n \: v_k \:\rho_u) \:  dx\; 
	d\hat{\mu} \\ 
	&- \int_{M/\xi} 	
	\int_{B_u}   
	\tilde{f}_n  \; 
	\sum_{k=1}^{d_u}
	\dfrac{\partial (\rho_u v_k)}{\partial x_k} \; dx\; d\hat{\mu}.
\end{align}
The first term goes to zero at each $n$ due to cancellations 
along the boundaries of $B_u$ at different $u$s \cite{ruelle1}. To see this, 
choose a finite cover $\cup_{l\leq r}\xi(u_l) \supset M$ and 
take a partition of unity supported on each $B_{u_l} = \Phi_{u_l}(\xi(u_l))$ 
so that the integrals on the boundaries of $B_{u_l}$ are 0. Using dominated 
convergence, the second term converges to 0 as $n \to \infty$ 
at $\mu$ almost every $u$. Hence 
$\lim_{n\to \infty} 
\langle Df_n \cdot V, \mu\rangle = 0$.
\end{proof}

\begin{lemma}{\label{lem:stableRuelleFormula}}
The approximate formula Eq.\ref{eqn:approximateUnstableContribution} 
for the unstable contribution has an error that decays as ${\cal O}(1/\sqrt{N})$:
$e_{N,M} := \abs{\langle J, \partial_s\mu^s\rangle_{\rm unstable} -
	(1/N)\sum_{n=0}^M\sum_{i=0}^{N-1}J(u_{n+i}) g(u_i)} \leq \cu/\sqrt{N},$
	as $M \to N$,
	for some $\cu > 0$.
\end{lemma}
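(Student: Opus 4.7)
The plan is to split the error $e_{N,M}$ using the triangle inequality into a truncation error (from replacing the infinite series $\sum_{n=0}^\infty$ by a finite sum $\sum_{n=0}^M$) and a Monte Carlo sampling error (from replacing each ensemble average by an ergodic average along a trajectory of length $N$). First I would reorganize the double sum by setting $\Psi_M(u) := \sum_{n=0}^M J(\varphi_n(u))$, so that the ergodic approximation appearing in the lemma equals $(1/N) \sum_{i=0}^{N-1} g(u_i)\,\Psi_M(u_i)$, i.e.\ a single ergodic average of the bounded function $F_M := g\,\Psi_M$. By \cref{thm:rued} and Eq.~\ref{eqn:unstableContributionFinal}, the exact unstable contribution equals $\sum_{n=0}^\infty \langle J_n\,g, \mu\rangle$, so
\begin{align*}
e_{N,M} \;\leq\; \Big|\sum_{n=M+1}^\infty \langle J_n\,g, \mu\rangle\Big| \;+\; \Big|\langle F_M, \mu\rangle - \frac{1}{N}\sum_{i=0}^{N-1} F_M(u_i)\Big|.
\end{align*}

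For the truncation term, I would invoke exponential decay of time correlations on a uniformly hyperbolic attractor (\cite{chernov}, and section 6 of \cite{young}). Since $g \in L^\infty(\mu)$ by \cref{thm:rued}, $J \in C^2(M)$, and the identity $\langle g, \mu\rangle = 0$ was derived at the end of \cref{sec:unstableContributionDerivation} from the convergence of the series $\sum_n \langle D(a^i J_n)\cdot V^i\rangle$ together with Theorem~3.1(b) of \cite{ruelle}, it follows that $|\langle J_n\,g, \mu\rangle| \leq C\gamma^n$ for some $\gamma \in (0,1)$ and $C > 0$. The truncation tail is therefore $\mathcal{O}(\gamma^M)$, which is negligible compared to $1/\sqrt{N}$ once $M$ grows at least logarithmically with $N$, and in particular as $M \to N$.

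For the sampling term, I would apply the CLT for ergodic sums on uniformly hyperbolic attractors to the bounded function $F_M$, which yields an error of size $\sigma_M/\sqrt{N}$ where $\sigma_M^2$ is the associated asymptotic variance. To control $\sigma_M$ uniformly in $M$, I would expand $F_M = g\,\Psi_M$ and dominate the $\mathcal{O}(M^2)$ cross-covariances $\langle g\,J_n\,(g\,J_m)\circ\varphi_k\rangle - \langle g\,J_n\rangle\langle g\,J_m\rangle$ by $C'\gamma^{|k-(m-n)|}$ using decay of correlations, much as in the Gordin martingale-coboundary decomposition for $\sum_n (J_n - \langle J\rangle)$. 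Combining the two estimates gives $e_{N,M} \leq C\gamma^M + \cu/\sqrt{N}$, which is $\mathcal{O}(1/\sqrt{N})$ as $M \to N$, with $\cu$ proportional to $\norm{g}_\infty$ and to the CLT variance of the ergodic sum of $J$.

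The main obstacle is verifying the hypotheses of the CLT for $F_M$: the paper only establishes $g \in L^\infty(\mu)$ in \cref{thm:rued}, while the standard CLT for hyperbolic systems requires H\"older regularity of the test function or the finite-first-moment condition of \cite{chernov}. As the paper itself notes in \cref{sec:algorithm}, this regularity of $g$ must either be assumed or established separately from the iterative construction in Eq.~\ref{eqn:ruedEquation}; once it is, the argument above produces the desired $\mathcal{O}(1/\sqrt{N})$ rate.
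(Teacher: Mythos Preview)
Your proposal is correct and follows essentially the same route as the paper: the same triangle-inequality split into a truncation tail $\sum_{n>M}\langle J_n g,\mu\rangle$ controlled by exponential decay of correlations together with $\langle g,\mu\rangle=0$, and a sampling error for the ergodic average of $F_M=\sum_{n=0}^M J_n\,g$ controlled by the CLT. You are in fact more careful than the paper on one point: the paper simply writes the CLT constant as $c_1$ without discussing its dependence on $M$, whereas you flag the need to bound $\sigma_M$ uniformly and sketch how to do so via decay of the cross-covariances.
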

\begin{proof}
Suppose that the central limit theorem applies to the function 
$\sum_{n=0}^M J \circ \varphi_n\: g$. Then, we can say that,
for a sufficiently large $N$,
\begin{align}
	\label{eqn:cltforJg}
	\abs{\frac{1}{N}
	\sum_{i=0}^{N-1}\sum_{n=0}^M J(u_{n+i}) g(u_i) - 
	\langle \sum_{n=0}^M J\circ\varphi_n\: g, \mu\rangle} \leq 
	\dfrac{c_1}{\sqrt{N}}.
\end{align}
Further assuming that the decay of correlations between $J\circ \varphi_n$ 
	and $g$ is exponentially fast, we have, for every $n \in \mathbb{Z^+}$,
\begin{align}
\label{eqn:exponentialDecayOfCorrelations}
	\abs{\langle J\circ \varphi_n g, \mu \rangle - 
	\langle J\rangle \langle g\rangle} \leq c_2 \gamma^n, \;\;0\leq \gamma < 1.
\end{align}
	Since we have already shown in the main text (\cref{subsec:unstableContributionComputation}) that $\langle g\rangle = 0$, Eq. \ref{eqn:exponentialDecayOfCorrelations} gives $\abs{\langle J\circ \varphi_n g, \mu \rangle} \leq c_2 \gamma^n$. Thus, 
considering the sequence of functions $h_m := \sum_{n=0}^m J\circ \varphi_n\: g$,
\begin{align}
	\abs{\langle h_m , \mu\rangle} \leq 
	c_2 \sum_{n=0}^m \gamma^n \leq c_2/(1 - \gamma).
\end{align}
Thus, $\left\{h_m \right\} \in L^1(\mu)$ and $\norm{h_m}_1 \leq 
c_2/(1 - \gamma)$, and so by dominated covergence theorem, we have that,
	$\sum_{n=0}^\infty \langle J\circ\varphi_n\: g, \mu\rangle = \langle \sum_{n=0}^\infty
	J\circ\varphi_n \: g, \mu\rangle$. Then, again using Eq. \ref{eqn:exponentialDecayOfCorrelations}, we have
\begin{align}
	\label{eqn:remainder}
	\abs{\sum_{n=0}^\infty\langle J \circ \varphi_n \: g, \mu \rangle 
	- \langle \sum_{n=0}^M J\circ\varphi_n\: g, \mu \rangle} 
	\leq c_2 \gamma^M/(1 - \gamma).
\end{align}
Finally,
	\begin{align}
		e_{N,M} \leq \abs{\frac{1}{N}
	\sum_{i=0}^{N-1}\sum_{n=0}^M J(u_{n+i}) g(u_i) - 
	\langle \sum_{n=0}^M J\circ\varphi_n\: g, \mu\rangle} + 
	\abs{ \sum_{n=0}^\infty \langle J \circ \varphi_n \: g, \mu \rangle 
	- \langle \sum_{n=0}^M J\circ\varphi_n\: g, \mu \rangle},
	\end{align}
which using Eq. \ref{eqn:cltforJg} and Eq. \ref{eqn:remainder} gives,
	\begin{align}
		e_{N,M} \leq  c_2 \gamma^M/(1 - \gamma) + 
	\dfrac{c_1}{\sqrt{N}}.
	\end{align}
Taking the limit $M \to N$, this results in $e_{N, N} \in {\cal O}(1/\sqrt{N})$.
\end{proof}

\section*{Acknowledgments}
The authors are very grateful to Semyon Dyatlov for contributing to this 
work through many valuable discussions. 

\bibliographystyle{siamplain}
\bibliography{refs}

\end{document}